\documentclass[review]{elsarticle}

\usepackage{lineno}%,hyperref}
\modulolinenumbers[5]
\usepackage{graphicx}
\usepackage{latexsym,amssymb}
\usepackage{xcolor}
\usepackage{amsfonts}
\usepackage{amsmath}
\usepackage{amsthm}

\allowdisplaybreaks[4]
\newtheorem{coro}{Corollary}[section]
\newtheorem{lem}{Lemma}[section]
\newtheorem{rem}{Remark}[section]
\newtheorem{defi}{Definition}[section]
\newtheorem{theo}{Theorem}[section]
\newtheorem{example}{Example}[section]
\usepackage{txfonts}

\journal{Journal of \LaTeX\ Templates}

%%%%%%%%%%%%%%%%%%%%%%%
%% Elsevier bibliography styles
%%%%%%%%%%%%%%%%%%%%%%%
%% To change the style, put a % in front of the second line of the current style and
%% remove the % from the second line of the style you would like to use.
%%%%%%%%%%%%%%%%%%%%%%%

%% Numbered
%\bibliographystyle{model1-num-names}

%% Numbered without titles
%\bibliographystyle{model1a-num-names}

%% Harvard
%\bibliographystyle{model2-names.bst}\biboptions{authoryear}

%% Vancouver numbered
%\usepackage{numcompress}\bibliographystyle{model3-num-names}

%% Vancouver name/year
%\usepackage{numcompress}\bibliographystyle{model4-names}\biboptions{authoryear}

%% APA style
%\bibliographystyle{model5-names}\biboptions{authoryear}

%% AMA style
%\usepackage{numcompress}\bibliographystyle{model6-num-names}

%% `Elsevier LaTeX' style
\bibliographystyle{elsarticle-num}
%%%%%%%%%%%%%%%%%%%%%%%

\begin{document}

\begin{frontmatter}

\title{Energy-preserving multi-symplectic Runge-Kutta methods for Hamiltonian wave equations
%\tnoteref{mytitlenote}
}
%\tnotetext[mytitlenote]{Fully documented templates are available in the elsarticle package on \href{http://www.ctan.org/tex-archive/macros/latex/contrib/elsarticle}{CTAN}.}

\author[address1]{Chuchu Chen\corref{mycorrespondingauthor}} 
\cortext[mycorrespondingauthor]{Corresponding author}
\ead{chenchuchu@lsec.cc.ac.cn}
\author[address1]{Jialin Hong}
\ead{hjl@lsec.cc.ac.cn}
\author[address2]{Chol Sim}
\ead{simchol@star-co.net.kp}
\author[address2]{Kwang Sonwu}

\address[address1]{LSEC, ICMSEC, Academy of Mathematics and Systems Science, CAS, Beijing 100190; School of Mathematical Sciences, University of Chinese Academy of Sciences, Beijing 100049}
\address[address2]{Institute of Mathematics, Academy of Sciences, Pyongyang, Democratic People's Republic of Korea}

\begin{abstract}
  It is well-known that a numerical method which is at the same time geometric structure-preserving and physical property-preserving cannot exist in general for Hamiltonian partial differential equations.
    In this paper, we present a novel class of parametric multi-symplectic Runge-Kutta methods for Hamiltonian wave equations, which can also conserve energy simultaneously in a weaker sense with a suitable parameter.
The existence of such a parameter, which enforces the energy-preserving property, is proved  under certain assumptions on the fixed step sizes and the fixed initial condition. 
 We compare the proposed method with the classical multi-symplectic Runge-Kutta method in numerical experiments, which shows the remarkable energy-preserving property of the proposed method
 and illustrate the validity of theoretical results.
\end{abstract}

\begin{keyword}
Hamiltonian wave equations \sep energy preservation \sep multi-symplectic Runge-Kutta methods
\MSC[2010] 65P10 \sep  65M06 \sep 65N06  \sep 65Z05
\end{keyword}

\end{frontmatter}

%\linenumbers
\section{Introduction}

 Hamiltonian wave equation is the most important mathematical model in some scientific fields like quantum mechanics, plasma physics, etc. This equation is a typical example of Hamiltonian partial differential equations (PDEs)
of the form: 
\begin{equation}\label{equation:1.1}
M\partial _tz+K\partial _xz=\nabla _zS(z),\qquad z\in R^n,
\end{equation}
where $M,K\in R^{n\times n}$ are two skew-symmetric matrices,
$S:~R^n\rightarrow R$ is a given smooth function (at least twice
continuously differentiable), $\nabla_zS(z)$ is the classical
gradient on $R^n$, and $x,t$ denote the spatial and
temporal directions, respectively.

 For Hamiltonian PDEs \eqref{equation:1.1}, the two most prominent characteristics
 are multi-symplecticity, i.e.,
\begin{equation}\label{equation:1.2}
\partial_t \omega + \partial _x \kappa =0
\end{equation}
with $\omega = dz \wedge M dz, \kappa = dz \wedge K dz$,
and conservativeness, for example, the energy  conservation law (ECL):
\begin{equation}\label{equation:1.3}
\partial_t E(z) + \partial _x F(z) =0
\end{equation}
with energy density and energy flux being
$
E(z)=S(z)-\frac{1}{2}z^{T} K \partial _x z
$
and
$
F(z)=\frac{1}{2}z^{T} K \partial _t z,
$
respectively,
and  the momentum conservation law (MCL):
\begin{equation}\label{equation:1.4}
\partial_t I(z) + \partial _x G(z) =0,
\end{equation} 
with momentum density  and momentum flux being
$
I(z)=\frac{1}{2}z^{T} M \partial _x z,
$
and 
$
G(z)=S(z)-\frac{1}{2}z^{T} M \partial _t z,
$
respectively.

A well-known principle to design numerical methods is that numerical methods should preserve as much as possible the intrinsic properties of the underlying system. 
Generally, the numerical approximations of Hamiltonian system fall into two aspects: geometric structure-preserving numerical methods and physical property-preserving numerical methods. 
We start from the numerical study for Hamiltonian ordinary differential equations (ODEs), which has formed a well-developed subject through several decades efforts.
On the construction of symplectic numerical methods, there are several approaches, e.g., symplectic Runge-Kutta (RK) type methods, methods based on generating functions, variational integrators, etc., while on the construction of physical property-preserving numerical methods, different approaches, e.g., methods with projection type techniques, discrete gradient methods, Hamiltonian boundary value methods, etc., are proposed.
Except for the symplecticity-preservation, another important feature of symplectic numerical methods is that they can preserve exactly quadratic invariants (specially, quadratic Hamiltonian functions).  Generally, the famous Ge-Marsden theorem (\citep{zm})
shows the nonexistence of a constant time stepping algorithm which is at the same time symplectic and energy conserving.
The efforts towards this purpose on methods inheriting both features are made in a weaker sense. For example, \cite{kmo} proposes the adaptive time stepping symplectic-energy-momentum integrators with the symplecticity being viewed in the space-time sense. The constructive idea by introducing a parameter in each step which can be suitably tuned in a way to enforce energy conservation, is introduced in \cite{stw}, and is developed refinedly in \cite{bgi,bit,wxl}. More precisely, they introduce a family of one-step methods $y_1(\alpha)=\Phi_{h}(y_0,\alpha)$ depending on a real parameter $\alpha$ such that this family of methods are symplectic for any fixed choice of  $\alpha$, and that a special value of the parameter can be chosen depending on $y_0$ and $h$ with the conservation of  energy at the same time. 
For a system of Hamiltonian PDEs, the multi-symplecticity and ECL/MCL are the most relevant features characterizing its intrinsic property.
A natural question arises: Can one find a numerical method which combines the multi-symplectic structure and ECL/MCL at the same time in certain sense? 
It is believed that the difficulties in such a problem not only come from the balance of geometric structure and physical property, but also result from 
the numerical analysis of PDEs such as the interaction of time and space,  etc.

For the numerical study of Hamiltonian PDEs, 
the
multi-symplectic structure is investigated 
and then a lot of
reliable numerical methods (e.g. \cite{gcw, hls, hs, r, wh,hl,hll,hjl2}) preserving the multi-symplectic structure,
 for instance, 
muti-symplectic RK/PRK methods, collocation methods, splitting methods, spectral methods, etc., 
have been developed.   Especially, we refer to \cite{r,hjl,hjll,lz} and
references therein for the multi-symplectic methods of Hamiltonian wave
equations. 
 On the physical property-preserving aspect, 
as we mentioned that classical conservation laws such as ECL \eqref{equation:1.3} and MCL \eqref{equation:1.4} play an important role in Hamiltonian PDEs. 
Though they locally character the conservativeness, they are equivalent to the global conservation laws when an appropriate boundary condition is endowed.
The conservation of quadratic ECL and MCL under multi-symplectic  methods is proved in \cite{r,hjl}.
The accuracy of conservation laws of energy and momentum for Hamiltonian PDEs under RK discretizations is investigated in \cite{hjl}.
The approximate preservation of the global energy, momentum, and all harmonic actions over long time under temporal symplectic methods and spatial spectral methods applied to semilinear wave equation is rigorously proved in \cite{chl}. 
There have been several works on numerically preserving local ECL, MCL of Hamiltonian PDEs, e.g., \cite{gcw} for a systematic framework.
However, as far as we know, there are no known results about numerical methods which preserve multi-symplectic structure and ECL/MCL simultaneously for Hamiltonian PDEs. 

The main aim of this paper is to propose multi-symplectic
 RK methods which share the property of energy conservation for Hamiltonian wave equation.
We apply parametrized symplectic RK methods to Hamiltonian PDEs in space and in time, respectively, with the same real parameter $\alpha$, which is proved to be a concatenated $\alpha$-RK method preserving the multi-symplectic structure for all the real parameters. The preservation of ECL under the concatenated $\alpha$-RK method is obtained by suitably tuning the parameter, that is, 
we can show that the parameter $\alpha^{*}$ exists at each element domain composed by  spatial and temporal step sizes, which leads to the preservation of multi-symplecticity and ECL. That is a weaker version of the standard conservativeness, since
the existence of this parameter depends on the step sizes $\Delta x,\Delta t$, and on the initial data.

This paper is organized as follows. In section \ref{sec:2}, we present Hamiltonian wave equation and its multi-symplectic form.
Then a family of RK methods concatenated in the spatial and temporal  directions is introduced.
In section  \ref{sec:3}, we propose a concatenated parametric RK method by using W-transformation, which is multi-symplectic for all parameters. A numerical method with multi-symplecticity and energy-preserving can be obtained by selecting a suitable parameter.
Then, in section  \ref{sec:4}, we prove the existence of such parameter  with the aid of
the Lyapunov-Schmidt decomposition method, the homotopy continuation method  and the implicit function theorem under some assumptions on
RK matrices, the spatial and temporal  step sizes, etc.
In section  \ref{sec:5}, we present some numerical experiments to show the effectiveness in energy-preserving of the presented method.
 A short conclusion is given in section  \ref{sec:6}.

\section{Multi-symplectic Runge-Kutta methods}
\label{sec:2}

Consider the scalar wave equation
 \begin{equation}\label{equation:2.1}
\partial _{tt}u=\partial_{xx}u-V^\prime (u),
\qquad (x,t)\in \Omega \subset R^2,
\end{equation}
where $V:R\rightarrow R$ is a smooth function,  which is a typical example of Hamiltonian PDE \eqref{equation:1.1}.
By introducing canonical momenta $v:=\partial_tu$, $w:=\partial_xu$ and defining the state variable
$
          z=(u,v,w)^{T}\in R^3,
$
we rewrite wave equation \eqref{equation:2.1} as
\begin{align}
\partial _tu&=v,  \nonumber\\
\partial _xu&=w, \label{equation:2.2} \\
\partial _tv-\partial _x w&=-V^\prime (u).\nonumber
\end{align}
Using this variable, we obtain
$$
   M =
      \left(\begin{array}{ccc}

              0   &-1 &0  \\
              1   &0  &0  \\
              0   &0  &0
            \end{array}   \right)
      \quad {\rm and} \qquad
   K =
      \left(\begin{array}{ccc}
              0   &0  &1\\
              0   &0  &0\\
             -1   &0  &0

            \end{array}   \right)\\
$$
as well as the Hamiltonian
$
    S(z)=\frac{1}{2}(v^{2}-w^{2})+V(u).
$

Multi-symplectic conservation law (\ref{equation:1.2}), for the wave
equation (\ref{equation:2.1}), is equivalent to
$
\partial _t [du \wedge dv]- \partial _x [du \wedge dw] = 0.
$ We also obtain the  ECL (\ref{equation:1.3}) with
\begin{equation}\label{equation:2.6}
E(z)=\frac{1}{2}(w^{2}+v^{2})+V(u) \quad {\rm and}  \quad  F(z)=-vw
\end{equation}
and the  MCL (\ref{equation:1.4}) with
$
I(z)=-vw$ and $G(z)=\frac{1}{2}(w^{2}+v^{2})-V(u),
$ 
 respectively.

Now, we start our study with a multi-symplectic  RK method for Hamiltonian wave equation (\ref{equation:2.1}). First, we recall the definition of multi-symplectic integrator for   Hamiltonian PDEs.
  For the purpose of numerical approximation, following \cite{wh}, we introduce a uniform grid $(x_j, t_k) \in R^{2}$, in the plan of $(x,t)$, with  a spatial step size
$\Delta x$ and a
temporal step size $\Delta t$. The approximated value of $z(x,t)$ at the
mesh point $(x_j, t_k)$ is denoted by $z_{j,k}$. A numerical
discretization of (\ref{equation:1.1}) and (\ref{equation:1.2}),
can be written, respectively, as
\begin{align}
M\partial^{j,k}_t z_{j,k} + K\partial^{j,k}_x z_{j,k}&= (\nabla_zS_{j,k})_{j,k},\label{equation:2.8}\\
\partial^{j,k}_t \omega_{j,k} + \partial^{j,k}_x \kappa_{j,k}&= 0,\label{equation:2.9}
\end{align}
where $S_{j,k}:=S(z_{j,k}, x_j, t_k)$, $\partial ^{j,k}_t$,
$\partial ^{j,k}_x$ are discretizations of the derivatives
$\partial_t$ and
$\partial_x$, respectively.
The numerical method
\eqref{equation:2.8} is called a multi-symplectic integrator of the
system \eqref{equation:1.1} if (\ref{equation:2.9}) is a discrete
conservation law of (\ref{equation:2.8}) (\cite{wh}).

Next, we consider multi-symplectic  RK methods to solve
Hamiltonian wave equations.  
Multi-symplectic formulation
\eqref{equation:2.2} of the nonlinear wave
equation \eqref{equation:2.1} is discretized in space and in time by
a pair of  RK methods with $s$, $r$ stages, respectively (\cite{r}).
This is called the concatenated Runge-Kutta method.
Then the resulting discretizations are as follows:
\begin{align}
&U_{i,m}=u^m_0+\Delta x \sum^s_{j=1}a_{ij}\partial_xU_{j,m},\nonumber\\
&W_{i,m}=w^m_0+\Delta x \sum^s_{j=1}a_{ij}\partial_xW_{j,m},\nonumber\\
&u^m_1=u^m_0+\Delta x \sum^s_{i=1}b_i\partial_xU_{i,m},\nonumber\\
& w^m_1=w^m_0+\Delta x \sum^s_{i=1}b_i\partial_xW_{i,m},\nonumber\\
&U_{i,m}=u^0_i+\Delta t \sum^r_{n=1}\tilde{a}_{mn}\partial_tU_{i,n},\nonumber\\
&V_{i,m}=v^0_i+\Delta t \sum^r_{n=1}\tilde{a}_{mn}\partial_tV_{i,n},\label{equation:2.10}\\
& u^1_i=u^0_i+\Delta t
\sum^r_{m=1}\tilde{b}_{m}\partial_tU_{i,m},\nonumber\\
& v^1_i=v^0_i+\Delta t
\sum^r_{m=1}\tilde{b}_{m}\partial_tV_{i,m},\nonumber\\
&\partial_t U_{i,m}=V_{i,m},\nonumber\\
&\partial_x U_{i,m}=W_{i,m},\nonumber\\
&\partial_t V_{i,m}-\partial_x W_{i,m}=-V'(U_{i,m}),\nonumber
\end{align}
where we introduce the notations
$ U_{i,m}\approx u(c_i\Delta x,d_m\Delta t), ~ u_i^1\approx u(c_i\Delta x,\Delta t),~
u^m_1\approx u(\Delta x,d_m\Delta t),~ \text{etc.,}
$
with
$
 c_i = \sum_{j=1}^{s} a_{ij}, ~ d_m = \sum_{n=1}^r \tilde a_{mn}.
$

\begin{rem}
It was proved that if the coefficients of the method (\ref{equation:2.10}) satisfy the following condition:
\begin{equation}\label{equaion:2.11}
\begin{cases}
 b_ia_{ij}+b_ja_{ji}-b_ib_j=0,\quad \forall~ i,j=1,2,\ldots,s,\\
 \tilde b_m \tilde a_{mn} + \tilde b_n \tilde a_{nm}- \tilde b_m
 \tilde b_n=0, \quad \forall~ m,n=1,2,\ldots,r,
\end{cases}
\end{equation}
or equivalently,
\begin{equation}
\begin{cases}
M\equiv BA+A^{T}B-bb^{T}=0,\tag{$\ref{equaion:2.11}^{\prime}$}\\
\tilde{M}\equiv\tilde{B}\tilde{A}+\tilde{A}^{T}\tilde{B}-\tilde{b}\tilde{b}^{T}=0,
\end{cases}
\end{equation}
where $B={\rm diag}(b)$ and $\tilde{B}={\rm diag}(\tilde{b})$,
then this method is multi-symplectic (\cite{r,wh}).  
\end{rem}

We give some
examples of the multi-symplectic  RK methods by using  the Butcher tableau.

\begin{example} If $r=s=1$ with the following Butcher tableaux,  we obtain a multi-symplectic Gauss
collocation method with midpoint in time and in space
respectively, i.e., the centered Preissman scheme.  
\begin{center}
  \begin{tabular}{c|ccc|cc}
  $\frac{1}{2}$& $\frac{1}{2}$ &\qquad\qquad & $\frac{1}{2}$ & $\frac{1}{2}$ & \\
    \cline{1-2}\cline{4-5}
    & $1$  &\qquad\qquad &  & $\frac{1}{2}$     \\
  \end{tabular}.
\end{center}
If $r=1$, $s=2$ with the following Butcher tableaux, we obtain a multi-symplectic Gauss collocation
method with midpoint in time and fourth order Gauss collocation method in space.
\begin{center}
  \begin{tabular}{c|ccc|ccc}
  $\frac{1}{2}$& $\frac{1}{2}$ &\qquad\qquad & $\frac{1}{2}-\frac{\sqrt{3}}{6}$ & $\frac{1}{4}$ & $\frac{1}{4}-\frac{\sqrt{3}}{6}$ &\\
    \cline{1-2}
    & $1$  &\qquad\qquad & $\frac{1}{2}+\frac{\sqrt{3}}{6}$ & $\frac{1}{4}+\frac{\sqrt{3}}{6}$ & $\frac{1}{4}$     \\
    \cline{4-6}
    \multicolumn{2}{c}{\quad} & &  & $\frac{1}{2}$  & $\frac{1}{2}$    &
  \end{tabular}.
\end{center}
\end{example}

For the numerical method (\ref{equation:2.10}), the discrete ECL corresponding to (\ref{equation:1.3}) with
(\ref{equation:2.6}) is
\begin{equation}\label{equaion:2.12}
\sum_{i=1}^s b_i[E_i^1-E_i^0] \Delta x + \sum_{m=1}^r \tilde b_m
[F_1^m - F_0^m] \Delta t = 0,
\end{equation}
with
\begin{equation}\label{equaion:2.13}
E_i^1=\frac{1}{2}((w_i^1)^2+(v_i^1)^2)+V(u_i^1), \qquad
F_1^m=-v_1^mw_1^m,
\end{equation}
 and the discrete MCL can be given in the same manner.
 It is well-known that if the method (\ref{equation:2.10}) is multi-symplectic, then this
 method preserves the discrete ECL \eqref{equaion:2.12} for quadratic $V$. The motivation of this paper is to construct a new numerical method which can preserve the multi-symplectic structure and the discrete  ECL for the general $V$ simultaneously.
 
 \section{Multi-symplectic $\alpha$-Runge-Kutta methods}
\label{sec:3}

The W-transformation is very useful in the characterization and
construction of A-stable  RK methods and is also practicable
to construct high order symplectic  RK type
methods. Now we give some definitions and
results on the W-transformation. 

Consider shifted and normalized
Legendre polynomials
$
P_k(x)=\frac{\sqrt{2k+1}}{k!}\frac{d^k}{dx^k}(x^k(x-1)^k)
$
in [0,1].
These polynomials satisfy the integration formulas:
\begin{align*}
&\int^{x}_{0}P_0(t)dt= \xi_{1}P_{1}(x)+\frac{1}{2}P_0(x),\\
&\int^{x}_{0}P_k(t)dt =\xi_{k+1}P_{k+1}(x)-\xi_{k}P_{k-1}(x),\qquad
k=1,2,\ldots
\end{align*}
with
$
\xi_{k}=\frac{1}{2\sqrt{4k^2-1}}.
$

The definition of a W-transformation relies on a generalized
Vandermonde matrix $W=(w_{ij})^s_{i,j=1}$ 
whose elements are the shifted and normalized
Legendre polynomials of degree $j$ for $j=0,1,\ldots
,s-1$, evaluated at $c_i~(i=1,2,\ldots  ,s)$, i.e., $w_{ij}=P_{j-1}(c_i).
$

\begin{defi}
\cite[p.81]{hw}. Let $\eta, \xi$ be given integers between
$0$ and $s-1$. We say that an $s\times s\text{-}$matrix $W$
satisfies $T(\eta,\xi)$ for the quadrature formula
$(b_i,c_i)^s_{i=1}$ if
\begin{itemize}
\item [(1)] $W$ is nonsingular;
\item [(2)] $w_{i,j}=P_{j-i}(c_i), ~i=1,2,\ldots,s,~j=1,2,\ldots,  \max(\eta,\xi)+1$;
\item [(3)] $W^TBW=\left(\begin{array}{cc}
              I &0\\
              0 &R
            \end{array}   \right),$
where $I$ is the $(\xi+1)\times (\xi+1)$ identity matrix and $R$ is
an arbitrary $(s-\xi-1)\times (s-\xi-1)$ matrix.
            \end{itemize}
   \end{defi}

If $W$ satisfies $T(\eta,\xi)$ for the quadrature formula $(b_i,c_i)_{i=1}^s$, then the W-transformation for an $s$-stage RK method is
defined by
\begin{equation}\label{equation:3.1}
X=W^TBAW,
\end{equation}
where $B={\rm diag}(b)$, and  $A$ is the coefficient matrix  of this RK method. Further, if $A$ is the coefficient matrix for Gauss method of order $2s$,  then
$$
X=W^TBAW=W^{-1}AW= \left(\begin{array}{cccccc}
     \frac{1}{2}   &-\xi_1   \\
     \xi_1   &0   &-\xi_2\\
        &\xi_2   &\ddots &\ddots\\
            &&\ddots &0 &-\xi_{s-1}\\
            &&&\xi_{s-1}  &0
     \end {array}\right),
$$
where $
\xi_{k}=\frac{1}{2\sqrt{4k^2-1}}
$ (see Theorem 5.6 and Theorem 5.9 in \cite{hw}). 
Note that the multi-symplectic condition $\eqref{equaion:2.11}^{\prime}$ can be written in the form:
\begin{equation}\label{multi-condition}
\begin{cases}
W^{T}MW=X+X^{T}-e_1e_1^{T}=0,\\
\tilde{W}^{T}\tilde{M}\tilde{W}=\tilde{X}+\tilde{X}^{T}-\tilde{e}_1\tilde{e}_1^{T}=0,
\end{cases}
\end{equation}
where $e_1=(1,0,\ldots,0)\in R^s$ and $\tilde{e}_1=(1,0,\ldots,0)^{T}\in R^r$.

Now, for two given $RK$ methods $(c,A,b)$ and $(\tilde c, \tilde A,
\tilde b)$ with the transformation matrices $X$ and $\tilde X$
defined by (\ref{equation:3.1}), we define the perturbed matrices
$X(\alpha)$ and $\tilde X(\alpha)$ as
\begin{equation}\label{equation:3.5}
X(\alpha)=X+\alpha V, \quad \tilde X(\alpha)=\tilde X+\alpha \tilde
V,
\end{equation}
where $\alpha$ is a real parameter and
\begin{equation*}
V= \left(\begin{array}{ccccccccc}
     0 &\cdots  &0   \\
     \vdots &\ddots  &&\ddots\\
     0   &&\ddots &&-1\\
     &\ddots  &&\ddots &&\ddots\\
     &&1 &&\ddots &&0\\
     &&&\ddots &&\ddots &\vdots\\
      &&&&0 &\cdots &0
     \end {array}\right).
\end{equation*}
Nonzero elements $1$ and $-1$ in the matrix $V$ should be arranged such that the matrix keeps to be skew-symmetric. The matrix $\tilde V$ is
defined similarly as $V$.

\begin{defi}
Given $s\text{-}$ and $r\text{-}$stage symplectic RK methods
$(c,A,b)$ and $(\tilde c, \tilde A, \tilde b)$, we define the
corresponding $\alpha\text{-}RK$ methods as $(c,A(\alpha),b)$,
$(\tilde c, \tilde A(\alpha), \tilde b)$, respectively, where
\begin{align}
A(\alpha)=(W^TB)^{-1}X(\alpha)W^{-1}=A+\alpha(W^TB)^{-1}VW^{-1},\label{equation:3.7}\\
\tilde A(\alpha)=(\tilde W^T \tilde B)^{-1}\tilde X(\alpha)\tilde
W^{-1}=\tilde A+\alpha(\tilde W^T\tilde B)^{-1}\tilde V \tilde
W^{-1}.\label{equation:3.8}
\end{align}
We call it a concatenated $\alpha\text{-}RK$ method, if the above $s\text{-}$ and $r\text{-}$stage $\alpha\text{-}RK$ methods
are applied to the system (\ref{equation:2.10}) in spatial direction and
temporal direction 
respectively.
\end{defi}

If the quadrature $(b,c)$ has order  $\geq 2s-1$ and $(\tilde b,\tilde c)$ has order  $\geq2r-1$, (\ref{equation:3.7}) and (\ref{equation:3.8}) are
reduced, respectively, to
\begin{align}
A(\alpha)=WX(\alpha)W^{-1}=A+\alpha WVW^{-1},\label{equation:3.9}\\
\tilde A(\alpha)=\tilde W\tilde X(\alpha)\tilde W^{-1}=\tilde A+\alpha \tilde W \tilde V \tilde W^{-1}.\label{equation:3.10}
\end{align}

\begin{theo}\label{Thm:3,5}
Given a multi-symplectic $RK$ method, if $W$ (resp. $\tilde{W}$) satisfies $T(\eta,\xi)$ for $(b_i,c_i)_{i=1}^s$ (resp. $(\tilde{b}_i,\tilde{c}_i)_{i=1}^{r}$), then  for any $\alpha$, the corresponding concatenated
$\alpha\text{-}RK$ method in 
(\ref{equation:3.7})-(\ref{equation:3.8}) is also multi-symplectic.
\end{theo}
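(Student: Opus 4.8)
The plan is to reduce the claim to a direct algebraic check of the matrix multi-symplecticity conditions for the perturbed coefficients, since by the remark in Section~\ref{sec:2} the conditions \eqref{equaion:2.11} are sufficient for multi-symplecticity of the concatenated method. Because the $\alpha$-RK methods $(c,A(\alpha),b)$ and $(\tilde c,\tilde A(\alpha),\tilde b)$ act independently in the spatial and temporal directions, it is enough to prove $M(\alpha):=BA(\alpha)+A(\alpha)^{T}B-bb^{T}=0$ for every $\alpha$; the temporal identity $\tilde M(\alpha):=\tilde B\tilde A(\alpha)+\tilde A(\alpha)^{T}\tilde B-\tilde b\tilde b^{T}=0$ then follows verbatim with tildes throughout. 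I would carry out the verification in the W-transformed coordinates, where the condition reads as the first line of \eqref{multi-condition}.

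First I would record the two structural facts supplied by the hypothesis that $W$ satisfies $T(\eta,\xi)$ for $(b_i,c_i)_{i=1}^{s}$: $W$ is nonsingular, and $W^{T}BW=\mathrm{diag}(I,R)$; combining the latter with the observation that $P_0\equiv 1$ forces the first column of $W$ to be the all-ones vector, one gets $W^{T}b=W^{T}BWe_1=e_1$. Conjugating $M(\alpha)$ by $W$ and using the defining relation \eqref{equation:3.7} in the equivalent form $W^{T}BA(\alpha)W=X(\alpha)$, together with $W^{T}bb^{T}W=(W^{T}b)(W^{T}b)^{T}=e_1e_1^{T}$, yields
\[
W^{T}M(\alpha)W=X(\alpha)+X(\alpha)^{T}-e_1e_1^{T}.
\]
Now I would insert the perturbation \eqref{equation:3.5}, $X(\alpha)=X+\alpha V$, and use that $V$ is skew-symmetric by construction, so that $V+V^{T}=0$. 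Hence
\[
W^{T}M(\alpha)W=\bigl(X+X^{T}-e_1e_1^{T}\bigr)+\alpha(V+V^{T})=X+X^{T}-e_1e_1^{T}=W^{T}MW,
\]
and the right-hand side vanishes precisely because the underlying RK method is multi-symplectic, so that \eqref{multi-condition} holds. Since $W$ is nonsingular we conclude $M(\alpha)=0$, and the same computation with $\tilde W$, $\tilde B$, $\tilde V$, $\tilde X(\alpha)$ gives $\tilde M(\alpha)=0$; thus the concatenated $\alpha$-RK method satisfies \eqref{equaion:2.11} and is multi-symplectic, for every $\alpha$.

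I do not expect a genuine obstacle here: the statement is essentially a one-line consequence of the skew-symmetry of $V$ (and $\tilde V$) once the bookkeeping is in place, and it is manifestly independent of the value of $\alpha$, which is exactly what makes the construction useful. The only points deserving care are (i) justifying that the multi-symplecticity condition is transported faithfully into its W-transformed form \eqref{multi-condition}, which is where $T(\eta,\xi)$ together with the identities $W^{T}BW=\mathrm{diag}(I,R)$ and $W^{T}b=e_1$ (and their tilded analogues) is actually used, and (ii) noting, when the quadratures have orders $\geq 2s-1$ and $\geq 2r-1$, that the compact forms \eqref{equation:3.9} of $A(\alpha)$ and $\tilde A(\alpha)$ agree with \eqref{equation:3.7}; this last point, however, is not needed for the multi-symplecticity conclusion.
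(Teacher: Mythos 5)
Your proposal is correct and follows essentially the same route as the paper: the paper's proof simply invokes the W-transformed condition \eqref{multi-condition} and the skew-symmetry of $V$ and $\tilde V$, which is exactly the computation you carry out. The extra bookkeeping you supply (deriving $W^{T}b=e_1$ from $T(\eta,\xi)$ and $P_0\equiv 1$, and conjugating $M(\alpha)$ by $W$) is just an expanded version of the same one-line argument.
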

\begin{proof}
By utilizing \eqref{multi-condition},  
the proof follows from the anti-symmetricity of $V$ and $\tilde{V}$.
\end{proof}

\begin{example}\label{example3.1}
Based on \eqref{equation:3.9}-\eqref{equation:3.10}, 
we  give an example of multi-symplectic $\alpha$-Gauss collocation method with $r=1$ and $s=2$, whose Butcher tableaux are as follows:
\begin{center}

  \begin{tabular}{c|ccc|ccc}

  $\frac{1}{2}$& $\frac{1}{2}$ &\qquad\qquad & $\frac{1}{2}-\frac{\sqrt{3}}{6}$ & $\frac{1}{4}$ & $\frac{1}{4}-\frac{\sqrt{3}}{6}-\alpha$ &\\
    \cline{1-2}
    & $1$  &\qquad\qquad & $\frac{1}{2}+\frac{\sqrt{3}}{6}$ & $\frac{1}{4}+\frac{\sqrt{3}}{6}+\alpha$ & $\frac{1}{4}$     \\
    \cline{4-6}
    \multicolumn{2}{c}{\quad} & &  & $\frac{1}{2}$ & $\frac{1}{2}$ &
  \end{tabular}.
\end{center}
Consequently, if $\alpha=0$, we retrieve the multi-symplectic Gauss
collocation method with $r=1$, $s=2$. 
\end{example}

 \begin{rem}
  Since matrix $V$  must be skew-symmetric, it  vanishes in the case of $r=1$. As shown in the above example, it is sufficient to introduce one
parameter $\alpha$ only in the spatial direction  for the preservation of energy. 
\end{rem}

Similarly to the case of $\alpha=0$, by  concatenating two
$\alpha$-Gauss collocation methods in 
spatial and temporal directions respectively, we can construct a lot of
multi-symplectic $\alpha$-Gauss collocation methods.

\begin{rem}
 If the periodic boundary
condition $u(0,t)=u(L,t)$ is endowed, \eqref{equation:1.3} yields the preservation of the  ``global" energy:
\begin{equation}\label{equation:3.12}
{\cal E}(t):=\int_{0}^L E(z(x,t))dx=\int_ {0}^L
E(z(x,0))dx=:{\cal E}(0) .
\end{equation}
One can also have the conservation of the  ``global" momentum if $u(0,t)=u(L,t)$, i.e.,\begin{equation}\label{equation:3.14}
{\cal I}(t):=\int_{0}^L I(z(x,t))dx=\int_ {0}^L
I(z(x,0))dx=:{\cal I}(0).
\end{equation}
We will show through numerical experiments  that the
method proposed in this paper conserves well the local and
``global" energy in section \ref{sec:5}.  
\end{rem}

\section{Existence of energy-preserving multi-symplectic Runge-Kutta methods}
\label{sec:4}

In this section, we give an existence result of the parameter $\alpha^{*}$ which ensures the
energy-preserving property of the proposed multi-symplectic methods. The existence of the parameter is  in a weaker sense, which means that this parameter depends on the step sizes $\Delta x,\Delta t$, and on the initial data.

To find such a parameter $\alpha^{*}$,  we need
to solve a nonlinear system with $4rs+1$ unknowns:
\begin{equation}\label{equation:3.11}
\begin{cases}
U_{i,m}=u^m_0+\Delta x\sum^s_{j=1}a_{ij}(\alpha)W_{j,m},\\
W_{i,m}=w^m_0+\Delta x\sum^s_{j=1}a_{ij}(\alpha)\partial_xW_{j,m},\\
U_{i,m}=u^0_i+\Delta t\sum^r_{n=1}\tilde{a}_{mn}(\alpha)V_{i,n},\\
V_{i,m}=v^0_i+\Delta t\sum^r_{n=1}\tilde{a}_{mn}(\alpha)\partial_t
        V_{i,n},\\
\partial_t V_{i,m}-\partial_x W_{i,m}=-V'(U_{i,m}),\\
\sum_{i=1}^s b_i[E_i^1-E_i^0] \Delta x + \sum_{m=1}^r \tilde b_m
        [F_1^m - F_0^m] \Delta t = 0.
\end{cases}
\end{equation}
Denoting  
$$
U=(U_1,U_2,\ldots,U_r)^T, \quad  V=(V_1,V_2,\ldots,V_r)^T, \quad W=(W_1,W_2,\ldots,W_r)^T,
$$
$$
\partial_t V=(\partial_t V_1,\partial_t V_2,\ldots,\partial_t V_r)^T,\qquad
\partial_x W=(\partial_x W_1,\partial_x W_2,\ldots,\partial_x W_r)^T
$$
with
$
U_m=(U_{1m},U_{2m},\ldots,U_{sm}) \in R^s$
  and  $V_m,W_m,\partial_t V_m, \partial_x W_m\in R^s$, $m=1,2,\ldots, r$ being similarly defined,
the nonlinear system (\ref{equation:3.11}) can be rewritten in a
compact form:
\begin{equation}\label{equation:4.1}
\begin{cases}
U=u_0\otimes e_s + \Delta x[I_r\otimes A(\alpha)]W,\\
W=w_0\otimes e_s +\Delta x[I_r\otimes A(\alpha)][\partial_tV+R(U)],\\
U=e_r\otimes u^0 + \Delta t[\tilde A(\alpha)\otimes I_s]V,\\
V=e_r\otimes v^0 + \Delta t[\tilde A(\alpha)\otimes I_s]\partial_t V,\\
b^T(E^1-E^0)\Delta x+\tilde b^T(F_1-F_0)\Delta t=0,
\end{cases}
\end{equation}
where $\otimes$ denotes the Kronecker product, $u^0=(u_1^0,u_2^0,\ldots,u_s^0)^T$, $u^1=(u_1^1,u_2^1,\ldots,u_s^1)^T$,
$u_0=(u_0^1,u_0^2,\ldots,u_0^r)^T$, $u_1=(u_1^1,u_1^2,\ldots,u_1^r)^T$, and $v^0, v^1, v_0, v_1, w^0, w^1, w_0, w_1$ have the similar notions.
In addition, 
$$
e_s=(1,1,\ldots,1)^T\in R^s, \qquad
e_r=(1,1,\ldots,1)^T\in R^r,
$$
$$
R(U)=(\tilde V_1^T,\tilde V_2^T,\ldots,\tilde V_r^T)^T,
\qquad \tilde V_m=(V'(U_{1m}),\ldots,V'(U_{sm}))^T,
$$
$$
A(\alpha)=(a_{ij}(\alpha))^s_{i,j=1}, \qquad \tilde
A(\alpha)=(\tilde a_{mn}(\alpha))^r_{m,n=1},
$$
$$
E^1=(E_1^1,E_2^1,\ldots,E_s^1)^T,\quad
E^0=(E_1^0,E_2^0,\ldots,E_s^0)^T,
$$
$$
F_1=(F_1^1,F_1^2,\ldots,F_1^r)^T,\quad
F_0=(F_0^1,F_0^2,\ldots,F_0^r)^T,
$$
$$
b=(b_1,b_2,\ldots,b_s)^T, \qquad \tilde b=(\tilde b_1,
\tilde b_2, \ldots,\tilde b_r)^T.
$$
Let
$
S(u^0)=\left(V(u_1^0),V(u_2^0),\ldots,V(u_s^0)\right)^T$
and $
S(u^1)=\left(V(u_1^1),V(u_2^1),\ldots,V(u_s^1)\right)^T,
$ 
then the last equation in (\ref{equation:4.1}) becomes 
\begin{equation}\label{equation:4.2}
\begin{split}
\bigg[&\frac{1}{2}b^T{\rm diag}(w^1)w^1+\frac{1}{2}b^T{\rm
diag}(v^1)v^1 +b^TS(u^1)-\frac{1}{2}b^T{\rm
diag}(w^0)w^0\\
&-\frac{1}{2}b^T{\rm diag}(v^0)v^0
-b^TS(u^0)\bigg ]\Delta x
+\Big[\tilde b^T {\rm diag}(v_0)w_0-\tilde b^T {\rm diag}(v_1)w_1\Big]\Delta
t=0.
\end{split}
\end{equation}
We rewrite \eqref{equation:4.1} as
\begin{equation}\label{equation:4.3}
\begin{cases}
L(\Delta x, \Delta t,\alpha)Y-\Delta x F(Y,\alpha)=Y_0,\\
T(u^1,v^1,w^1,v_1,w_1)-T(u^0,v^0,w^0,v_0,w_0)=0,
\end{cases}
\end{equation}
where 
$
Y=(U^T,V^T,W^T,\partial_t V^T)^T,
$
$
Y_0=((u_0\otimes e_s)^T, (w_0\otimes e_s)^T,(e_r\otimes
u^0)^T,(e_r\otimes v^0)^T)^T,
$ 
$
T(x,y,z,p,q)=\left [\frac{1}{2}b^T {\rm diag}(z) z +\frac{1}{2}b^T
{\rm diag}(y) y+b^TS(x)\right ] \Delta x-\tilde b^T {\rm
diag}(p)q\Delta t,
$ and
$$
L(\Delta x, \Delta t, \alpha)= \left(\begin{array}{cccc}
I_{rs} &-\Delta x(I_r\otimes A(\alpha)) &O_{rs}  &O_{rs}\\
O_{rs}  &O_{rs}  &I_{rs} &-\Delta x(I_r\otimes A(\alpha))\\
I_{rs}  &-\Delta t(\tilde A(\alpha)\otimes I_s) &O_{rs}  &O_{rs}\\
O_{rs} &I_{rs} &O_{rs} &-\Delta t(\tilde A(\alpha)\otimes I_s)
\end{array} \right),
$$
$$
F(Y, \alpha)= \left(\begin{array}{cccc} 0_{rs}\\
(I_r\otimes A(\alpha))R(U)\\
0_{rs}\\
0_{rs} \end{array} \right),
$$
with $O_{rs}$ being an $(rs\times rs)$-zero matrix and $0_{rs}$ being an
$rs$-zero vector.

From (\ref{equation:2.10}), we have
\begin{align*}
&w_1^m=w_0^m+\Delta x \sum_{i=1}^s b_i(\partial_t V_{i,m}+R(U_{i,m})),\\
& u_i^1=u_i^0+\Delta t\sum_{m=1}^r \tilde b_m V_{i,m},
\\
&v_i^1=v_i^0+\Delta t\sum_{m=1}^r \tilde b_m \partial_t V_{i,m}.
\end{align*}
Meanwhile, we introduce two auxiliary systems:
\begin{align*}
&V_{i,m}=v_0^m+\Delta x \sum _{j=1}^s a_{ij}(\alpha)\partial_xV_{j,m},\\
&v_1^m=v_0^m+\Delta x\sum_{i=1}^sb_i\partial_xV_{i,m},\\
&W_{i,m}=w_i^0+\Delta t\sum_{n=1}^r\tilde a_{mn}(\alpha)\partial_tW_{i,n},\\
&w_i^1=w_i^0+\Delta t\sum_{m=1}^r\tilde b_m\partial_t W_{i,m}=w_i^0+\Delta t\sum_{m=1}^r\tilde b_m\partial_x V_{i,m},
\end{align*}
where we use $\partial_xV_{i,m}=\partial_t W_{i,m}$ in the last equation. Thus
the equations for $u_i^1, v_i^1, w_i^1, v_1^m, w_1^m$ can be written as
\begin{equation}\label{equation:4.4}
\begin{split}
&u^1=u^0+\Delta t \tilde B_* V,\\
&v^1=v^0+\Delta t \tilde B_* \partial_t V,\\
&w^1=w^0+\Delta t \tilde B_*\partial_x V\\
&v_1=v_0+\Delta x B_* \partial_x V,\\
&w_1=w_0+\Delta x B_* (\partial_t V+R(U)),
\end{split}
\end{equation}
where $\tilde B_*=\tilde b^T\otimes I_s$ and $B_*=I_r\otimes b^T$.

Now, let $y_1=(u^1,v^1,w^1,v_1,w_1)^T$, $y_0=(u^0,v^0,w^0,v_0,w_0)^T$.
Define the error function in the discrete  ECL as
$
G(\alpha)=T(y_1)-T(y_0).
$
When needed, $G(\alpha)$ may be written as $G(\alpha, \Delta x,\Delta t)$ to emphasize 
the dependence on mesh sizes $\Delta x$ and $\Delta t$.
The numerical solution of the multi-symplectic $\alpha\text{-}RK$ method
(\ref{equation:3.7})-(\ref{equation:3.8}) defines a
corresponding mapping of the form
$
y_1=\Phi_{\Delta x, \Delta t}(y_0, \alpha).
$
The nonlinear system (\ref{equation:4.1}), in which $4rs+1$ unknowns $U, V, W,\partial_t V$ and $\alpha$ need to be solved in every
rectangular domains composed of length $\Delta x$ and width $\Delta t$, reads
\begin{equation}\label{equation:4.6}
\begin{cases}
L(\Delta x,\Delta t, \alpha)Y=Y_0+\Delta xF(Y, \alpha),\\
G(\alpha)=0.
\end{cases}
\end{equation}
The solvability of this system yields the existence of the energy-preserving method.
For convenience, we denote  $h:=\Delta x$, $\tau:=\Delta t$, and define  the vector function
$$
\varphi(h,\tau,y_1,\alpha)=\left(\begin{array}{cc}
y_1-\Phi_{h,\tau}(y_0,\alpha)\\
T(y_1)-T(y_0)
\end{array}\right),
$$
then the system (\ref{equation:4.6}) is equivalent to
$\varphi(h,\tau,y_1,\alpha)=0$.

From (\ref{equation:4.4}), we know that
$\varphi(0,0,y_1,\alpha)=0$ for every  $y_1$ and $\alpha$. The Jacobian of
$\varphi$ with respect to $(y_1, \alpha)$ is
\begin{equation}\label{equation:4.7}
\frac{\partial \varphi}{\partial (y_1,\alpha)}(h,\tau,y_1,\alpha)=
\left(\begin{array}{cc}
I  &-\frac{\partial \Phi_{h,\tau}}{\partial \alpha}(y_0,\alpha)\\
\nabla^T T(y_1) &0
\end{array}\right)
\end{equation}
with $I$ being the identity matrix of dimension $3s+2r$. By using
the formula on determinant of block matrices, we get
$$
{\rm det}\left (\frac{\partial \varphi}{\partial
(y_1,\alpha)}(h,\tau,y_1,\alpha)\right )= {\rm det}\left(\nabla^T
T(y_1)\cdot \frac{\partial \Phi_{h,\tau}}{\partial
\alpha}(y_0,\alpha)\right).
$$
Since
$$
T(y_1)=\left [\frac{1}{2}b^T {\rm diag}(w^1)w^1+\frac{1}{2}b^T {\rm
diag}(v^1)v^1+b^T S(u^1)\right] h-\tilde b^T {\rm diag}(v_1)w_1
\tau,
$$
it holds that
\begin{align*}
\nabla^T T(y_1)=\bigg(&b^T S'_{u^1}(u^1)h,\quad \frac{1}{2}b^T({\rm
diag}(v^1)v^1)'_{v^1}h, \quad\frac{1}{2}b^T({\rm diag}(w^1)w^1)'_{w^1}h,\\
&-\tilde b^T ({\rm diag}(v_1)w_1)'_{v_1}\tau, \quad-\tilde b^T({\rm
diag}(v_1)w_1)'_{w_1}\tau \bigg ).
\end{align*}
Therefore, when $h=0$ and $\tau=0$, $\nabla^T T(y_1)$ is the zero
vector for any $\alpha$, and thus the rank of the matrix
(\ref{equation:4.7}) is $3s+2r$.

Due to singularity of the matrix  \eqref{equation:4.7} when $h=0$ and $\tau=0$, the implicit function theorem
cannot be applied directly to prove the solvability of the nonlinear system (\ref{equation:4.6}).
In \cite{bit},  the existence of parameter $\alpha^*$ for $\alpha$-Gauss collocation symplectic
method is proved with the aid of the Lyapunov-Schmidt
decomposition method. This decomposition method restricts the nonlinear system to
both the complement of a null space and the range of the Jacobian.
Thus one gets two subsystems whose Jacobians are nonsingular and then the implicit function theorem is
applicable.

Before we give the existence of $\alpha^{*}$, we first state the
solvability of the first system in \eqref{equation:4.6}, which could be proved similarly as  
in \cite[Theorem 3.1]{hsy}. 

\begin{lem}\label{lemma4.1}
If $h\leqslant \tau^2$
and RK matrix $\tilde A$ is nonsingular, then for
$|\alpha|\leqslant \alpha_0$, $h\leqslant h_0$ and $\tau\leqslant
\tau_0$ with $\alpha_0$, $h_0$ and $\tau_0$ small enough, there
exists a solution $Y(\alpha)$ of the first system in
(\ref{equation:4.6}).
\end{lem}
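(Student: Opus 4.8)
The plan is to recast the first system of \eqref{equation:4.6}, i.e. $L(h,\tau,\alpha)Y=Y_0+hF(Y,\alpha)$, as a fixed-point problem and apply a contraction mapping argument uniformly in the small parameters $\alpha$, $h$, $\tau$. First I would examine the structure of $L(h,\tau,\alpha)$: at $h=\tau=0$ it degenerates to a block matrix that is \emph{not} invertible (the second and fourth block rows become $I_{rs}$ and $I_{rs}$, duplicating information), so one cannot simply invert $L$ at the origin. The key is therefore to perform a block elimination exploiting the two copies of $U$ and of $V$ appearing in the four defining relations of \eqref{equation:4.1}: from $U=u_0\otimes e_s+h(I_r\otimes A(\alpha))W$ and $U=e_r\otimes u^0+\tau(\tilde A(\alpha)\otimes I_s)V$ one eliminates $U$, and similarly the two equations for $V$ (one from the auxiliary system in \eqref{equation:4.4}, namely $V=e_r\otimes v^0+h(I_r\otimes A(\alpha))\partial_x V$ with $\partial_x V=\partial_t W$ playing the role of a derivative, and one $V=e_r\otimes v^0+\tau(\tilde A(\alpha)\otimes I_s)\partial_t V$) are combined. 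This is exactly where the hypothesis $h\leqslant\tau^2$ and the nonsingularity of $\tilde A$ enter: after eliminating, the residual linear operator on the reduced unknowns has the form $I-\mathcal{M}(h,\tau,\alpha)$ where $\mathcal{M}$ carries a factor of $h/\tau$ (or $h\tau^{-1}$ times bounded quantities), and $\|\mathcal{M}\|\lesssim h/\tau\leqslant\tau\to 0$; invertibility of $\tilde A$ is needed so that the $\tau$-block can be solved for $\partial_t V$ and fed back with only an $O(1)$ amplification, leaving the genuinely small factor $h/\tau$.

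Once $L$ is shown invertible for $|\alpha|\leqslant\alpha_0$, $h\leqslant h_0$, $\tau\leqslant\tau_0$ with operator norm of $L^{-1}$ bounded by a constant $C$ uniform in these ranges (using $h\leqslant\tau^2$ to control the $h/\tau$ terms), I would define the map $\Psi(Y):=L(h,\tau,\alpha)^{-1}\big(Y_0+hF(Y,\alpha)\big)$ on a closed ball $B_\rho(Y^{(0)})$ around the natural base point $Y^{(0)}$ corresponding to $h=\tau=0$ (the constant-in-$U$ solution $Y_0$, suitably interpreted). Since $F(Y,\alpha)=\big(0,(I_r\otimes A(\alpha))R(U),0,0\big)^T$ and $R(U)=(V'(U_{i,m}))$ is smooth with $V$ twice continuously differentiable, $F$ is Lipschitz in $Y$ on $B_\rho$ with a constant $\Lambda$ depending only on $\rho$ and a bound for $A(\alpha)$. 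Then $\Psi$ maps $B_\rho$ into itself provided $h\,C\,\|F(Y^{(0)},\alpha)\|\leqslant\rho/2$ and $\Psi$ is a contraction provided $h\,C\,\Lambda\leqslant 1/2$; both hold for $h_0$ small enough. The Banach fixed-point theorem then yields a unique $Y(\alpha)\in B_\rho$ solving the first system.

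I would also note the dependence on $\alpha$: since $A(\alpha)$ and $\tilde A(\alpha)$ depend analytically (indeed affinely) on $\alpha$ by \eqref{equation:3.7}--\eqref{equation:3.8}, the fixed point $Y(\alpha)$ depends smoothly (analytically) on $\alpha$ on $|\alpha|\leqslant\alpha_0$ by the parametrized version of the contraction principle or by the implicit function theorem applied to $Y-\Psi(Y)=0$ whose Jacobian in $Y$ is $I-L^{-1}hF_Y$, invertible for small $h$; this smooth dependence is what will be needed in the subsequent existence proof for $\alpha^{*}$. The main obstacle I anticipate is the degeneracy of $L$ at $h=\tau=0$: making the block elimination rigorous and extracting precisely the factor $h/\tau^{?}$ that the scaling hypothesis $h\leqslant\tau^2$ is designed to dominate requires care, because a naive elimination produces terms like $\tau^{-1}h$ \emph{and} terms like $\tau^{-1}h\cdot(\text{powers of }\tilde A^{-1})$, and one must verify that \emph{all} of them are $O(h/\tau)=O(\tau)$ rather than $O(h/\tau^2)=O(1)$; this is the delicate point where the cited argument of \cite[Theorem 3.1]{hsy} must be adapted to the present concatenated $\alpha$-RK setting, and where the nonsingularity of $\tilde A$ is indispensable.
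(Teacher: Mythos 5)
The paper itself gives no proof of Lemma \ref{lemma4.1}: it only remarks that the statement ``could be proved similarly as in \cite[Theorem 3.1]{hsy}'', so your proposal can only be measured against the general strategy of that reference, which is indeed a fixed-point argument for the concatenated RK equations. Your overall plan --- identify the degeneracy of $L(0,0,\alpha)$, eliminate the two copies of the $U$-equation to expose the smallness governed by $h\leqslant\tau^2$ and $\tilde A^{-1}$, then contract --- is the right one, and your identification of where the two hypotheses enter is essentially correct.

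There is, however, a quantitative step that fails as written. Carrying out the elimination on the four block rows of $L$ (write $P:=\partial_tV$, $\mathcal{A}:=I_r\otimes A(\alpha)$, $\tilde{\mathcal{A}}:=\tilde A(\alpha)\otimes I_s$, and let $y_1,\dots,y_4$ denote the four blocks of the right-hand side $Y_0+hF$) yields $(h^2\mathcal{A}^2-\tau^2\tilde{\mathcal{A}}^2)P=(y_3-y_1)+\tau\tilde{\mathcal{A}}y_4-h\mathcal{A}y_2$. Since $\mathcal{A}$ and $\tilde{\mathcal{A}}$ commute, $h^2\mathcal{A}^2-\tau^2\tilde{\mathcal{A}}^2=-\tau^2\tilde{\mathcal{A}}^2\bigl(I-(h/\tau)^2\tilde{\mathcal{A}}^{-2}\mathcal{A}^2\bigr)$ is invertible for small $\tau$ (the relevant small factor is $(h/\tau)^2$, not $h/\tau$), but its inverse has norm of order $\tau^{-2}$; hence $\|L^{-1}\|\sim\tau^{-2}$ and is \emph{not} uniformly bounded, contrary to your claim. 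With the true constant $C\sim\tau^{-2}$, your self-map condition $hC\|F\|\leqslant\rho/2$ and contraction condition $hC\Lambda\leqslant 1/2$ only give $h/\tau^2\leqslant 1$ under ($S_1$), which is $O(1)$ rather than small, so the argument does not close. The repair must be block-wise: the nonlinearity $hF$ occupies only the $y_2$-slot, and the $y_2\to P$ block of $L^{-1}$ is $(h^2\mathcal{A}^2-\tau^2\tilde{\mathcal{A}}^2)^{-1}h\mathcal{A}=O(h/\tau^2)$, so $L^{-1}hF=O(h^2/\tau^2)=O(h)$ in the $P$-component and smaller in the others --- that is what makes $\Psi$ a contraction. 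Relatedly, boundedness of your base point $L^{-1}Y_0$ is not automatic: the $\tau^{-2}$ amplification acts on $y_3-y_1=e_r\otimes u^0-u_0\otimes e_s$, and one needs the cancellation $(y_3-y_1)+\tau\tilde{\mathcal{A}}y_4-h\mathcal{A}y_2=O(\tau^2)$, which holds only because $(u_0,w_0,u^0,v^0)$ are consistent samples of one smooth function with $w=u_x$, $v=u_t$; this consistency assumption on the data is exactly the ``assumption on the fixed initial condition'' alluded to in the paper and is not covered by ``suitably interpreted''. A final minor point: the second $V$-relation you propose to use comes from the auxiliary system (\ref{equation:4.4}), which is not part of the first system of (\ref{equation:4.6}); the elimination should instead pair the fourth block row $V=e_r\otimes v^0+\tau\tilde{\mathcal{A}}\partial_tV$ with the second block row for $W$.
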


Now we are in the position to give the existence of $\alpha^{*}$. The following assumptions similar as in \cite{bit} are made:
\begin{itemize}
\item[($S_1$)] $h\leqslant \tau^2$;

\item[($S_2$)] The function $G$ is analytical in a cube
$\mathcal{Q}=[-\alpha_0, \alpha_0]\times [-h_0,h_0]\times
[-\tau_0,\tau_0]$;

\item[($S_3$)] $G(0,h,\tau)=c_0\tau^d+\mathcal{O}(\tau^{d+1}),\quad c_0\neq
0$,

 $G(\alpha,h,\tau)=c(\alpha)\tau^{d-m} +\mathcal{O}
(\tau^{d+1-m}),\quad c(\alpha)\neq 0.$
\end{itemize}

\begin{lem}\label{lemma4.2}
Under the assumptions ($S_1$)-($S_3$), there exists a function
$\alpha^*=\alpha^*(h,\tau)$ defined in a rectangle $(-h_0,h_0)\times
(-\tau_0,\tau_0)$, such that
\begin{itemize}
\item[(i)] $G(\alpha^*(h,\tau),h,\tau)=0, \quad \text{for all} ~h \in (-h_0,h_0)
\text{ and }~ \tau \in (-\tau_0,\tau_0)$;
\item[(ii)] $\alpha^*(h,\tau)={\rm const} \cdot \tau^m+
\mathcal{O}(\tau^{m+1})$ for some integer $m$.
\end{itemize}

\end{lem}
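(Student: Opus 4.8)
The plan is to first use Lemma \ref{lemma4.1} to collapse the coupled system \eqref{equation:4.6} to a single scalar equation, and then to solve that equation by a blow-up of the parameter followed by the implicit function theorem. Indeed, under ($S_1$) and the nonsingularity of $\tilde A$, Lemma \ref{lemma4.1} produces the stage vector $Y(\alpha,h,\tau)$ solving the first system in \eqref{equation:4.6}; a contraction/analytic implicit function argument then shows that this $Y$, hence $y_1=\Phi_{h,\tau}(y_0,\alpha)$ and hence $G(\alpha,h,\tau)=T(y_1)-T(y_0)$, depends analytically on $(\alpha,h,\tau)$ on the cube $\mathcal Q$, which is the content of ($S_2$). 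It therefore remains to solve the scalar equation $G(\alpha,h,\tau)=0$ for $\alpha=\alpha^*(h,\tau)$. The genuine difficulty, already flagged after \eqref{equation:4.7}, is that $G(0,0,0)=0$ and $\partial_\alpha G(0,0,0)=0$, so the implicit function theorem does not apply at the origin.

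To desingularize, I would introduce the rescaled parameter $\alpha=\tau^{m}\beta$, with $m$ the integer from ($S_3$), and prove that $G(\tau^{m}\beta,h,\tau)$ is divisible by $\tau^{d}$ in the ring of convergent power series, say
\[
G(\tau^{m}\beta,h,\tau)=\tau^{d}\,\Gamma(\beta,h,\tau),
\]
with $\Gamma$ analytic near $\{\beta_{0}\}\times[-h_0,h_0]\times\{0\}$ and $\Gamma(\beta,h,0)$ affine in $\beta$. Here ($S_3$), read in the regime ($S_1$) which lets one treat $h$ as being of order $\tau^{2}$, forces the Taylor coefficients of $G$ of $\tau$-order below $d-m$ to vanish for every power of $\alpha$, and those of $\tau$-order below $d$ to vanish when $\alpha=0$; after the substitution $\alpha=\tau^{m}\beta$ every surviving monomial acquires $\tau$-order $\ge d$, and only the $\tau^{d}$-term at $\alpha=0$ and the linear-in-$\alpha$ part of the $\tau^{d-m}$-term survive at order exactly $d$. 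This yields $\Gamma(\beta,h,0)=c_{0}+c'(0)\,\beta$, which has the simple root $\beta_{0}=-c_{0}/c'(0)$ with $\partial_{\beta}\Gamma(\beta_{0},h,0)=c'(0)\neq 0$ uniformly in $h$; I use here the non-degeneracy $c'(0)\ne0$ implicit in the hypothesis $c(\alpha)\not\equiv 0$ of ($S_3$), which is exactly what makes $m$ the correct blow-up exponent.

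The conclusion then follows: the implicit function theorem applied to $\Gamma$ at $(\beta_0,h,0)$ yields, after possibly shrinking $h_0,\tau_0$, an analytic function $\beta^*(h,\tau)$ on $(-h_0,h_0)\times(-\tau_0,\tau_0)$ with $\beta^*(h,0)\equiv\beta_0$ and $\Gamma(\beta^*(h,\tau),h,\tau)=0$; setting $\alpha^*(h,\tau):=\tau^{m}\beta^*(h,\tau)$ gives $G(\alpha^*(h,\tau),h,\tau)=\tau^{d}\Gamma(\beta^*,h,\tau)=0$, which is (i), and $\beta^*(h,\tau)-\beta_0=\beta^*(h,\tau)-\beta^*(h,0)=\mathcal O(\tau)$ uniformly in $h$ gives $\alpha^*(h,\tau)=\beta_0\tau^{m}+\mathcal O(\tau^{m+1})$, which is (ii) with $\mathrm{const}=-c_0/c'(0)$. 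If one prefers not to shrink the domain, the branch $\beta^*$ can be extended over the whole rectangle by a homotopy continuation in $(h,\tau)$ along paths kept inside $\{h\le\tau^2\}$, using that $\partial_\beta\Gamma$ stays bounded away from zero along the continuation. The step I expect to be the main obstacle is the divisibility claim $G(\tau^{m}\beta,h,\tau)=\tau^{d}\Gamma(\beta,h,\tau)$: it requires converting the asymptotic relations ($S_3$) into precise joint vanishing orders for the Taylor coefficients of $G$, uniformly over $h\le\tau^2$, and confirming that the surviving $\tau^{d}$-coefficient is genuinely affine in $\beta$ with nonzero slope. Everything around it is a routine application of the analytic implicit function theorem.
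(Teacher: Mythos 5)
Your proposal follows essentially the same route as the paper's proof: both exploit $(S_1)$ to treat $h$ as order $\tau^2$, use $(S_2)$--$(S_3)$ to factor $G(\tau^m\eta,h,\tau)=\tau^d\tilde G(\eta,\tau)$ with $\tilde G$ affine in the rescaled parameter at $\tau=0$, and then apply the implicit function theorem at the simple root $\eta_0=-c_0/c'(0)$ to obtain $\alpha^*=\eta_0\tau^m+\mathcal O(\tau^{m+1})$. The divisibility step you flag as the main obstacle is exactly the power-series bookkeeping the paper carries out explicitly, so no essential difference remains.
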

\begin{proof} From  $(S_2)$ and $(S_3)$, the expansion of $G$ around $(0,0,0)$ is
\begin{equation}\label{equation:4.9}
G(\alpha,h,\tau)=\sum_{i=0}^{\infty}\sum_{j=0}^{\infty}\sum_{k=0}^{\infty}
\frac{1}{i!j!k!}\cdot
\frac{\partial^{i+j+k}G}{\partial\alpha^i\partial h^j\partial\tau^k}
(0,0,0)\alpha^i h^j\tau^k.
\end{equation}
By $(S_1)$, there exists a constant
$\beta~(0<\beta<1)$ such that $h=\beta \tau^2$. Substituting this
into (\ref{equation:4.9}) leads to
\begin{align*}
&G(\alpha,h,\tau)=G(\alpha,h(\tau),\tau)=\sum_{i=0}^{\infty}\sum_{j=0}^{\infty}\sum_{k=0}^{\infty}
\frac{\beta^j}{i!j!k!}\cdot
\frac{\partial^{i+j+k}G}{\partial\alpha^i\partial h^j\partial\tau^k}
(0,0,0)\alpha^i\tau^{2j+k}\\
&=\sum_{j=0}^{\infty}\sum_{k=0}^{\infty}\frac{\beta^j}{j!k!}\cdot
\frac{\partial^{j+k}G}{\partial h^j\partial\tau^k}
(0,0,0)\tau^{2j+k}+\sum_{i=1}^{\infty}\sum_{j=0}^{\infty}\sum_{k=0}^{\infty}
\frac{\beta^j}{i!j!k!}\cdot
\frac{\partial^{i+j+k}G}{\partial\alpha^i\partial h^j\partial\tau^k}
(0,0,0)\alpha^i\tau^{2j+k}.
\end{align*}
From $(S_3)$, the above equation can be rewritten as
$$
G(\alpha,h(\tau),\tau)=\sum_{2j+k\geqslant
d}\frac{\beta^j}{j!k!}\cdot \frac{\partial^{j+k}G}{\partial
h^j\partial\tau^k}
(0,0,0)\tau^{2j+k}+\sum_{i=1}^{\infty}\sum_{2j+k\geqslant d-m}
\frac{\beta^j}{i!j!k!}\cdot
\frac{\partial^{i+j+k}G}{\partial\alpha^i\partial h^j\partial\tau^k}
(0,0,0)\alpha^i\tau^{2j+k}.
$$

In order to find a solution $\alpha^*=\alpha^*(h(\tau),\tau)$
in the form of $\alpha^*(h(\tau),\tau)=\eta(\tau)\tau^m$ with
$\eta(\tau)$ being a real-valued function of $\tau$, we consider the change of 
variable
$\alpha=\eta \tau^m$, 
\begin{align*}
&G(\alpha,h(\tau),\tau)\\
&=\sum_{2j+k\geqslant
d}\frac{\beta^j}{j!k!}\cdot \frac{\partial^{j+k}G}{\partial
h^j\partial\tau^k}
(0,0,0)\tau^{2j+k}+\sum_{i=1}^{\infty}\sum_{2j+k\geqslant d-m}
\frac{\beta^j}{i!j!k!}\cdot
\frac{\partial^{i+j+k}G}{\partial\alpha^i\partial h^j\partial\tau^k}
(0,0,0)\eta^i\tau^{mi+2j+k}\\
&=\sum_{2j+k=d}\frac{\beta^j}{j!k!}\cdot
\frac{\partial^{j+k}G}{\partial h^j\partial\tau^k} (0,0,0)\tau^{d}+
\mathcal{O}(\tau^{d+1})+\sum_{2j+k = d-m} \frac{\beta^j}{j!k!}\cdot
\frac{\partial^{1+j+k}G}{\partial\alpha\partial h^j\partial\tau^k}
(0,0,0)\eta\tau^{d}+ \mathcal{O}(\tau^{d+1})\\
&=\tau^d\left[\sum_{2j+k=d}\frac{\beta^j}{j!k!}\cdot
\frac{\partial^{j+k}G}{\partial h^j\partial\tau^k}
(0,0,0)+\sum_{2j+k = d-m} \frac{\beta^j}{j!k!}\cdot
\frac{\partial^{1+j+k}G}{\partial\alpha\partial h^j\partial\tau^k}
(0,0,0)\eta + \mathcal{O}(\tau)\right ].
\end{align*}

Denoting by $\tilde G(\eta, \tau)$ the formula in the above bracket.
If $\tau \neq 0$, then $G(\alpha, h(\tau),\tau)=0$ if and only if
$\tilde G(\eta,\tau)=0$. Therefore, we apply the implicit
function theorem to $\tilde G(\eta,\tau)=0$. By $(S_3)$, both $\sum_{2j+k=d}\frac{\beta^j}{j!k!}\cdot
\frac{\partial^{j+k}G}{\partial h^j\partial\tau^k}(0,0,0)$ and
$\sum_{2j+k=d-m}\frac{\beta^j}{j!k!}\cdot
\frac{\partial^{1+j+k}G}{\partial \alpha \partial
h^j\partial\tau^k}(0,0,0)$ are not equal to zero. Let
$$
\eta_0=-\frac{\sum_{2j+k=d}\frac{\beta^j}{j!k!}\cdot
\frac{\partial^{j+k}G}{\partial
h^j\partial\tau^k}(0,0,0)}{\sum_{2j+k=d-m}\frac{\beta^j}
{j!k!}\cdot\frac{\partial^{1+j+k}G}{\partial \alpha \partial
h^j\partial\tau^k}(0,0,0)}, \quad\tau_0=0,
$$
then, $\tilde G(\eta_0,\tau_0)=0$. The functions $\tilde
G(\eta,\tau),$ $\tilde G_{\eta}(\eta, \tau)$ and $\tilde
G_{\tau}(\eta, \tau)$ are continuous in the neighbourhood of the point
$(\eta_0, \tau_0)$, moreover,
$$
\tilde G_{\eta}(\eta_0,
\tau_0)=\sum_{2j+k=d-m}\frac{\beta^j}{j!k!}\cdot
\frac{\partial^{1+j+k}G}{\partial \alpha \partial
h^j\partial\tau^k}(0,0,0)\neq 0.
$$
Hence the implicit function theorem ensures the existence of a
function $\eta=\eta(\tau)$ such that $\tilde G(\eta(\tau),\tau)=0.$

From the equation
$$
\tilde G(\eta,\tau)=\sum_{2j+k=d}\frac{\beta^j}{j!k!}\frac{\partial
^{j+k}G}{\partial h^j \partial
\tau^k}(0,0,0)+\sum_{2j+k=d-m}\frac{\beta^j}{j!k!}\frac{\partial
^{1+j+k}G}{\partial \alpha\partial h^j \partial
\tau^k}(0,0,0)\eta+\mathcal{O}(\tau)=0,
$$
the solution of $G(\alpha, \tau)=0$  takes
the form
$$
\alpha^*(h(\tau),\tau)=\eta(\tau)\tau^m=-\frac{\sum_{2j+k=d-m}\frac{\beta^j}{j!k!}\frac{\partial
^{j+k}G}{\partial h^j \partial
\tau^k}(0,0,0)}{\sum_{2j+k=d-m}\frac{\beta^j}{j!k!}\frac{\partial
^{1+j+k}G}{\partial \alpha \partial h^j \partial
\tau^k}(0,0,0)}\tau^m+\mathcal{O}(\tau^{m+1}),
$$
which completes the proof. 
\end{proof}

From Lemmas \ref{lemma4.1} and \ref{lemma4.2}, we obtain the main result.
\begin{theo}\label{thm4.1}
Let assumptions ($S_1$), ($S_2$) and ($S_3$) be satisfied and let
RK matrix $\tilde A$ be nonsingular. Then the system
(\ref{equation:4.6}) is solved uniquely, i.e., there exists a
unique solution $(Y^T$, $\alpha^*)$.
\end{theo}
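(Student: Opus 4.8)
The plan is to decouple the system \eqref{equation:4.6} into its two components and dispatch them in succession, which is exactly the way Lemmas~\ref{lemma4.1} and~\ref{lemma4.2} have been tailored. First I would regard the first block of \eqref{equation:4.6} as a square nonlinear system in the $4rs$ unknowns $Y=(U^T,V^T,W^T,\partial_tV^T)^T$ with $\alpha$ held fixed; by Lemma~\ref{lemma4.1}, once $h\leqslant\tau^2$ and $\tilde A$ is nonsingular, this system has a solution $Y=Y(\alpha)$ whenever $|\alpha|\leqslant\alpha_0$, $h\leqslant h_0$, $\tau\leqslant\tau_0$ with these constants small enough. The small additional point I would need is that $Y(\alpha)$ is in fact \emph{unique} on this cube: the argument behind Lemma~\ref{lemma4.1}, modelled on \cite[Theorem~3.1]{hsy}, is a fixed-point / implicit-function argument carried out after the Lyapunov--Schmidt reduction that removes the singularity of $L(0,0,\alpha)$, and that argument yields local uniqueness as well, so $\alpha\mapsto Y(\alpha)$ is a well-defined analytic branch. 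With it, $y_1=\Phi_{h,\tau}(y_0,\alpha)$ and hence $G(\alpha,h,\tau)=T(y_1)-T(y_0)$ become well-defined scalar functions of $(\alpha,h,\tau)$; assumption $(S_2)$ then supplies the analyticity of $G$ on a cube $\mathcal{Q}$ that is needed to run Lemma~\ref{lemma4.2}.

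Next I would invoke Lemma~\ref{lemma4.2} for this $G$: under $(S_1)$--$(S_3)$ it provides $\alpha^{*}=\alpha^{*}(h,\tau)$ on a rectangle $(-h_0,h_0)\times(-\tau_0,\tau_0)$ with $G(\alpha^{*}(h,\tau),h,\tau)=0$ and with the asymptotics $\alpha^{*}(h,\tau)=\mathrm{const}\cdot\tau^{m}+\mathcal{O}(\tau^{m+1})$. The only compatibility check is that this $\alpha^{*}$ lies in the interval on which Lemma~\ref{lemma4.1} was applied, i.e.\ $|\alpha^{*}|\leqslant\alpha_0$; this follows from the estimate $\alpha^{*}=\mathcal{O}(\tau^{m})$ after possibly shrinking $\tau_0$ (and $h_0$, keeping $h\leqslant\tau^2$). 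Putting $Y^{*}:=Y(\alpha^{*})$, the pair $(Y^{*T},\alpha^{*})$ then solves \eqref{equation:4.6}: the first block by Lemma~\ref{lemma4.1} with parameter $\alpha^{*}$, the second because $G(\alpha^{*})=0$. This gives existence.

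For uniqueness, suppose $(\bar Y^{T},\bar\alpha)$ is any solution of \eqref{equation:4.6} in the cube. The first block forces $\bar Y$ to solve the first subsystem with parameter $\bar\alpha$, so $\bar Y=Y(\bar\alpha)$ by the uniqueness noted above; the second block then says $G(\bar\alpha,h,\tau)=0$. Since the zero produced in Lemma~\ref{lemma4.2} comes from the implicit function theorem applied to $\tilde G(\eta,\tau)=0$ near $(\eta_0,0)$ with $\tilde G_{\eta}(\eta_0,0)\neq0$ (guaranteed by $(S_3)$), the function $G(\cdot,h,\tau)$ has $\alpha^{*}(h,\tau)$ as its only zero in the admissible range; hence $\bar\alpha=\alpha^{*}$ and $\bar Y=Y(\alpha^{*})=Y^{*}$.

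The genuinely hard part is not this concatenation, which is essentially bookkeeping, but lies upstream: at $h=\tau=0$ the Jacobian $\frac{\partial\varphi}{\partial(y_1,\alpha)}$ in \eqref{equation:4.7} is singular because $\nabla^{T}T(y_1)$ vanishes there, so the implicit function theorem cannot be applied directly and one must first perform the Lyapunov--Schmidt decomposition onto a complement of the null space and onto the range of the Jacobian --- this is precisely what Lemmas~\ref{lemma4.1} and~\ref{lemma4.2} already encapsulate. Thus the only new ingredients in proving Theorem~\ref{thm4.1} are (a)~upgrading Lemma~\ref{lemma4.1} from existence to local uniqueness of $Y(\alpha)$, and (b)~the mild domain-shrinking that puts the conclusions of the two lemmas on a common cube; both are routine once the lemmas are in hand.
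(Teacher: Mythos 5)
Your proposal is correct and follows essentially the same route as the paper, which proves Theorem~\ref{thm4.1} simply by concatenating Lemma~\ref{lemma4.1} (solvability of the first block, giving $Y(\alpha)$) with Lemma~\ref{lemma4.2} (existence of $\alpha^{*}$ with $G(\alpha^{*})=0$). If anything, you are more careful than the paper, which does not spell out the local uniqueness of $Y(\alpha)$ or the domain-compatibility check that you correctly identify as the only points needing attention.
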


 The assumption ($S_1$) and nonsingularity of $\tilde{A}$ in Theorem \ref{thm4.1} can be replaced by $\tau\leqslant h
^2$ and nonsingularity of $A$, respectively. In fact, from \cite{hsy} if RK matrix $A$ is nonsingular, the conclusion of Lemma \ref{lemma4.1} still holds. And  in the proof of Lemma
\ref{lemma4.2}, by considering the change of variable $\alpha(h)=\zeta(h)h^m$, we can still get the solvability of the second equation of \eqref{equation:4.6} respect to
$\alpha$.
The following corollary is a direct result.

\begin{coro}
Let  assumptions $(S_2)$ and $(S_3)$ be satisfied. Then two kinds
of conditions on step sizes and nonsigularties, each of which guarantees the solvability of the
system (\ref{equation:4.6}), are as follows:
\begin{itemize}
\item[(1)] $h\leqslant \tau ^2$ and $\tilde A$ is nonsingular,

\item[(2)]  $\tau \leqslant h^2$ and  $A$ is nonsingular.
\end{itemize}
\end{coro}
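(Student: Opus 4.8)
The plan is to obtain both cases from the apparatus already developed in Lemmas \ref{lemma4.1}, \ref{lemma4.2} and Theorem \ref{thm4.1}, exploiting the symmetry between the spatial and temporal discretizations of the concatenated $\alpha$-RK scheme. Case (1) needs no separate argument: it is exactly the conclusion of Theorem \ref{thm4.1}, which under $(S_1)$ (namely $h\leqslant\tau^2$), $(S_2)$, $(S_3)$ and nonsingular $\tilde A$ already furnishes a solution of \eqref{equation:4.6}. Hence the entire content of the corollary reduces to proving case (2), in which the roles of the two directions are interchanged.

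For case (2) I would reproduce the two-step structure of Theorem \ref{thm4.1}, systematically replacing $h\leftrightarrow\tau$ and $A\leftrightarrow\tilde A$. First, the solvability of the stage system (the first line of \eqref{equation:4.6}) must be re-established under $\tau\leqslant h^2$ with $A$ nonsingular; as noted after Theorem \ref{thm4.1}, the argument of \cite{hsy} underlying Lemma \ref{lemma4.1} applies with the spatial and temporal data exchanged, yielding a solution $Y(\alpha)$ for $|\alpha|$, $h$, $\tau$ small enough. Second, I would solve the scalar energy equation $G(\alpha,h,\tau)=0$ by mimicking the proof of Lemma \ref{lemma4.2}, now imposing the step-size constraint as $\tau=\beta h^2$ with $0<\beta<1$ and seeking $\alpha$ in the form $\alpha=\zeta(h)h^{m'}$, where $\zeta$ is a real-valued function of $h$. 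Substituting $\tau=\beta h^2$ into the Taylor expansion \eqref{equation:4.9} regroups the series into integer powers of $h$; factoring out the leading power then produces a reduced scalar function $\tilde G(\zeta,h)$, to which the implicit function theorem is applied at $h=0$, delivering $\zeta(h)$ and hence $\alpha^*(h)={\rm const}\cdot h^{m'}+\mathcal O(h^{m'+1})$.

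The step requiring the most care is verifying that the order conditions of $(S_3)$, which are phrased in $\tau$, force the corresponding leading coefficients in the $h$-expansion along $\tau=\beta h^2$ to be nonzero, so that the implicit function theorem is genuinely applicable to $\tilde G(\zeta,h)=0$. The crucial observation is that $(S_3)$ asserts $G(0,h,\tau)=c_0\tau^d+\mathcal O(\tau^{d+1})$ with $c_0$ a nonzero constant, which forces $\partial_h^j\partial_\tau^k G(0,0,0)=0$ for every $j$ whenever $k<d$; similarly $(S_3)$ forces $\partial_\alpha\partial_h^j\partial_\tau^k G(0,0,0)=0$ whenever $k<d-m$. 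Upon substituting $\tau=\beta h^2$, the surviving terms then select the lowest powers $h^{2d}$ for $G(0,h,\tau(h))$ and $h^{2(d-m)}$ for $\partial_\alpha G$, with coefficients proportional to $c_0$ and to $\partial_\alpha\partial_\tau^{d-m}G(0,0,0)$ respectively, both nonzero. Balancing these two orders fixes the exponent as $m'=2m$ and guarantees that $\tilde G(\zeta,h)$ has a nonzero value together with a nonvanishing $\zeta$-derivative at $h=0$, exactly as in the $\tau$-regime of Lemma \ref{lemma4.2}. The implicit function theorem then yields a unique $\alpha^*$, and combining this with the first step gives the solution $(Y^T,\alpha^*)$ of \eqref{equation:4.6}, with uniqueness following as in Theorem \ref{thm4.1}. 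This establishes case (2) and completes the corollary.
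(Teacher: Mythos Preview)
Your proposal is correct and follows essentially the same route as the paper: case~(1) is literally Theorem~\ref{thm4.1}, and case~(2) is obtained by the symmetry argument the paper sketches just before the corollary (Lemma~\ref{lemma4.1} via \cite{hsy} with $A$ nonsingular, and Lemma~\ref{lemma4.2} rerun with $\tau=\beta h^{2}$ and $\alpha=\zeta(h)h^{m'}$). Your additional care in reading $(S_3)$ strongly---so that $\partial_h^j\partial_\tau^k G(0,0,0)=0$ for all $j$ when $k<d$, whence the leading $h$-powers after the substitution are $h^{2d}$ and $h^{2(d-m)}$ and the balancing exponent is $m'=2m$---is a genuine refinement over the paper's looser ``$\alpha=\zeta(h)h^{m}$'' and makes the applicability of the implicit function theorem in the $h$-regime explicit rather than asserted.
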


\section{Numerical experiments}
\label{sec:5}

In this section, we present some numerical experiments to show  the effectiveness in energy
preserving of the proposed multi-symplectic $\alpha$-RK 
methods.  We consider the sine-Gordon equation (i.e., $V(u)=-\cos(u)$ in \eqref{equation:2.1}):
\begin{equation}\label{sg eq}
\begin{split}
\partial_{tt}u=\partial_{xx}u-\sin(u), \quad &(x,t)\in (-L/2,L/2)\times(0,T],\\
u(-L/2, t)=u(L/2,t), \quad& t\in[0,T],
\end{split}
\end{equation}
with two different  initial conditions: 
$$
u(x,0)=4\tan^{-1}(\frac{e^{x-L/6}}{\sqrt{1-\beta^2}})+4\tan^{-1}(\frac{e^{-x-L/6}}{\sqrt{1-\beta^2}})
$$
and
$$
u(x,0)=\frac{\partial}{\partial t}\left
[4\tan^{-1}(\frac{e^{x-L/6-\beta
t}}{\sqrt{1-\beta^2}})+4\tan^{-1}(\frac{e^{-x-L/6-\beta
t}}{\sqrt{1-\beta^2}})\right ]\Bigg|_{t=0},
$$
respectively.
On an infinite domain, these initial conditions correspond to a
soliton and anti-soliton solution moving with speed $\pm \beta$.
 We set $\beta=0.5$, $L=100$ and $T=200$.

 \subsection{Numerical method}

The numerical solution of \eqref{sg eq} is obtained by using the multi-symplectic
$\alpha\text{-}$Gauss collocation method with $r=1$ (midpoint in
time) and $s=2$ (fourth order Gauss collocation method in space)
with a  temporal step size $\Delta t=0.1$ and a spatial step size $\Delta x=1$ (see Example \ref{example3.1}).

First, in each domain composed of $\Delta t$ and $\Delta x$,
for  multi-symplectic $\alpha \text{-}$Gauss collocation method with
$r=1$ and $s=2$, we obtain a nonlinear system with the discrete
 ECL \eqref{equaion:2.12}-\eqref{equaion:2.13}:
\begin{figure}
\begin{center}
\includegraphics[width=8cm]{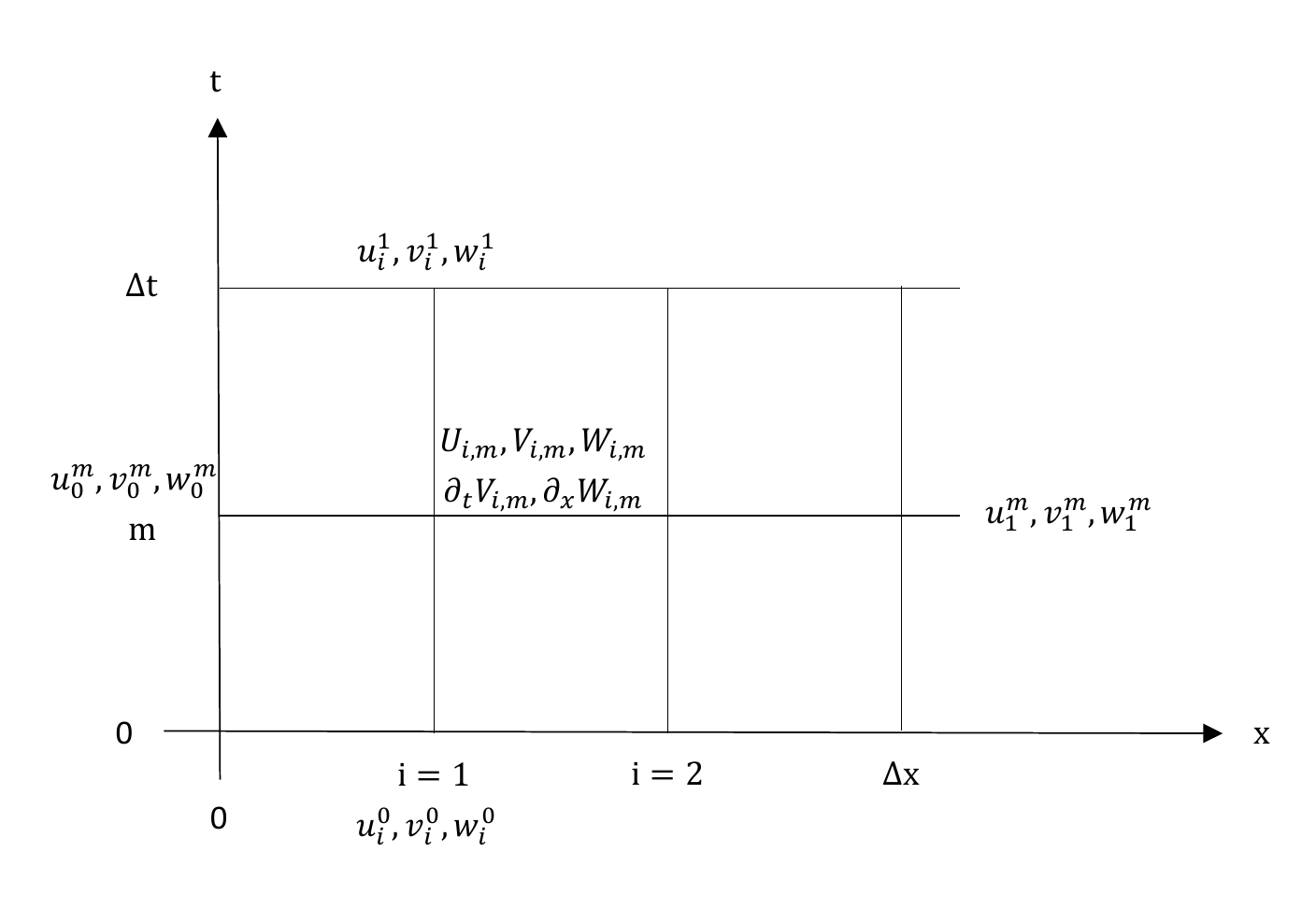}
 \caption{A uniform grid and the unknowns in grid points.} \label{Fig1}
 \end{center}
\end{figure}
\begin{align}
 & U_{1,m}=u^m_0+\Delta x (a_{11}W_{1,m}+(a_{12}-\alpha)W_{2,m}),\nonumber \\
 & U_{2,m}=u^m_0+\Delta x ((a_{21}+\alpha)W_{1,m}+a_{22} W_{2,m}),\nonumber \\
 & W_{1,m}=w^m_0+\Delta x (a_{11}\partial _x W_{1,m}+ (a_{12}- \alpha)
\partial_x  W_{2,m}),\nonumber \\
 & W_{2,m}=w^m_0+\Delta x ((a_{21}+\alpha)\partial _x W_{1,m}+a_{22}
\partial _x W_{2,m}),\nonumber \\
 & u^m_1=u^m_0+\Delta x (b_1 W_{1,m}+b_2 W_{2,m}),\nonumber \\
 & w^m_1=w^m_0+\Delta x (b_1 \partial_x W_{1,m}+b_2 \partial_x
   W_{2,m}),\nonumber \\
 & U_{1,m}=u^0_1+\frac{\Delta t}{2} V_{1,m},\nonumber \\
 & U_{2,m}=u^0_2+\frac{\Delta t}{2} V_{2,m},\nonumber \\
 & V_{1,m}=v^0_1+\frac{\Delta t}{2} \partial_t V_{1,m},   \label{equation:5.1}\\
 & V_{2,m}=v^0_2+\frac{\Delta t}{2} \partial_t V_{2,m},\nonumber \\
 & u^1_1=u^0_1+\Delta t V_{1,m},\nonumber \\
 & u^1_2=u^0_2+\Delta t V_{2,m},\nonumber \\
 & v^1_1=v^0_1+\Delta t \partial_t V_{1,m},\nonumber \\
 & v^1_2=v^0_2+\Delta t \partial_t V_{2,m},\nonumber \\
 & \partial_t V_{1,m}-\partial_x W_{1,m}=-\sin(U_{1,m}),\nonumber \\
 & \partial_t V_{2,m}-\partial_x W_{1,m}=-\sin(U_{2,m}),\nonumber \\
 & \frac{\Delta
 x}{4}\Big((w^1_1)^2+(w^1_2)^2+(v^1_1)^2+(v^1_2)^2-2\cos(u^1_1)-2\cos(u^1_2)\Big)-\Delta t v^m_1
   w^m_1 \nonumber\\
   &=\frac{\Delta
 x}{4}\Big((w^0_1)^2+(w^0_2)^2
+(v^0_1)^2+(v^0_2)^2-2\cos(u^0_1)-2\cos(u^0_2)\Big)-\Delta t v^m_0 w^m_0,\nonumber 
\end{align}
which has 20 unknowns $U_{1,m},
U_{2,m}, V_{1,m}, V_{2,m}, W_{1,m}, W_{2,m},
\partial_t V_{1,m},$ $ \partial_t V_{2,m},$ $\partial_x W_{1,m}, \partial_x W_{2,m},$
$u^m_0, w^m_0,v^m_0, u^1_1, u^1_2, v^1_1, v^1_2, w^1_1, w^1_2$ and
$\alpha$. 
Since \eqref{equation:5.1} only contains $17$ equations, it still needs three more
equations.
 Considering this fact, we add to the above system
the following equations,
\begin{equation}\label{equation:5.3}
\begin{aligned}
& u^m_0=u^0_0+\Delta t \tilde{a}_{11} v^m_0, \\
& u^1_1=u^1_0+\Delta x (a_{11} w^1_1+a_{12} w^1_2),\\
& u^1_2=u^1_0+\Delta x (a_{21} w^1_1+a_{22} w^1_2),\\
&u^1_0=u^0_0+\Delta t \tilde{b}_1 v^m_0,
\end{aligned}
\end{equation}
where $u^1_0$ is another unknown.
Therefore, we have the nonlinear system
(\ref{equation:5.1})-(\ref{equation:5.3}) with 21 unknowns.

\begin{figure}[ht]
\begin{center}
\includegraphics[width=7cm,height=4.5cm]{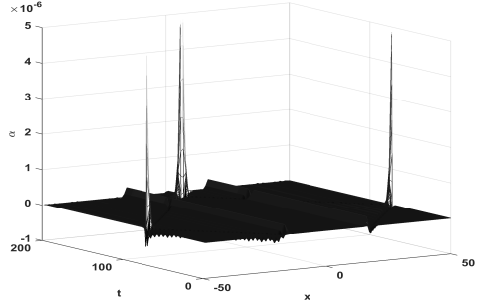}
\caption{Values of $\alpha$ in $\alpha$-Gauss collocation methods at $(x,t)$-plane
[-50, 50]$\times$ [0, 200] with $h=1$ and $\tau=0.1$.} \label{Fig2}
\end{center}
\end{figure}

\begin{figure}
\begin{center}
\includegraphics[width=8cm]{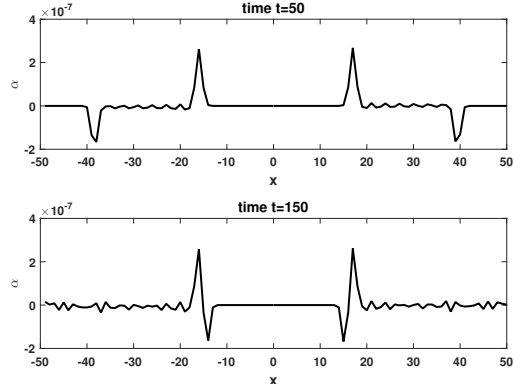}\\
\caption{Values of $\alpha$ in $\alpha$-Gauss collocation methods at two fixed moments t=50 and t=150.}\label{Fig3}
\end{center}
\end{figure}

Next, considering the periodic boundary condition, in each time level we
put together the above individual nonlinear systems through space
axis (including $M=100$ grid points), which leads to a nonlinear
system with $2100$ unknowns. Denoting by $X$  the unknowns, this nonlinear system can be rewritten as
$
F(X)=0,
$
 which is solved by using Newton iteration method with tolerance
$\epsilon=10^{-15}$.

\subsection{Numerical results}

Fig.~\ref{Fig2} presents the values of parameter $\alpha$ in multi-symplectic
$\alpha$-Gauss collocation method on $(x,t)$-plane [-50,
50]$\times$ [0, 200], which make the method 
preserve energy.
Fig.~\ref{Fig3} shows the values of  such parameter in details, for selected moments $t=50$ and $t=150$,
at all spatial grid points.
\begin{figure}
\begin{center}
\includegraphics[width=8cm]{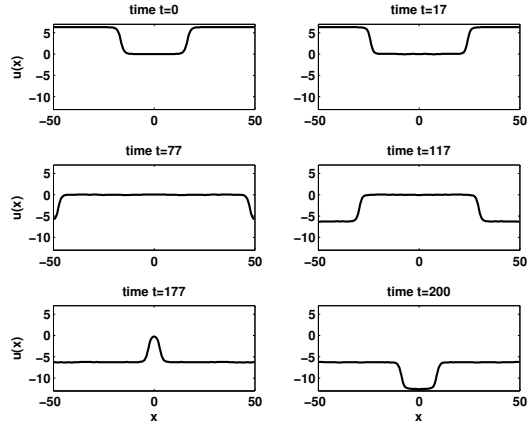}
 \caption{Time evolution of solution during the time interval t $\in [0, 200]$.}
 \label{Fig4}
 \end{center}
\end{figure}
 Numerical results indicate that such parameter sequence exists at every
calculation grids and has the absolute values closed to zero (about
$10^{-7} \thicksim 10^{-8}$).
Fig.~\ref{Fig4} is the cross-section plots  of the wave $u(x,t)$
 at some selected moments during the time interval $[0,200]$.

We compare the errors of the total energy ${\cal E}$$(t)$ by using
MSRK (Gauss Collocation, \cite{r}) and
$\alpha$-MSRK  ($\alpha\text{-}$Gauss Collocation) methods, respectively, which are shown in  Fig.~\ref{Fig5}.
 Numerical results show that in the
conservation of the total energy ${\cal E}$$(t)$, the error of
MSRK method is of about $10^{-3}$, while the error of $\alpha$-MSRK
method is of about $10^{-12}$. Both MSRK method
and $\alpha$-MSRK method conserve the total momentum ${\cal I}$$(t)$ exactly, since it is a
quadratic invariant.

\begin{figure}[tp!]
\begin{center}
\includegraphics[width=8cm]{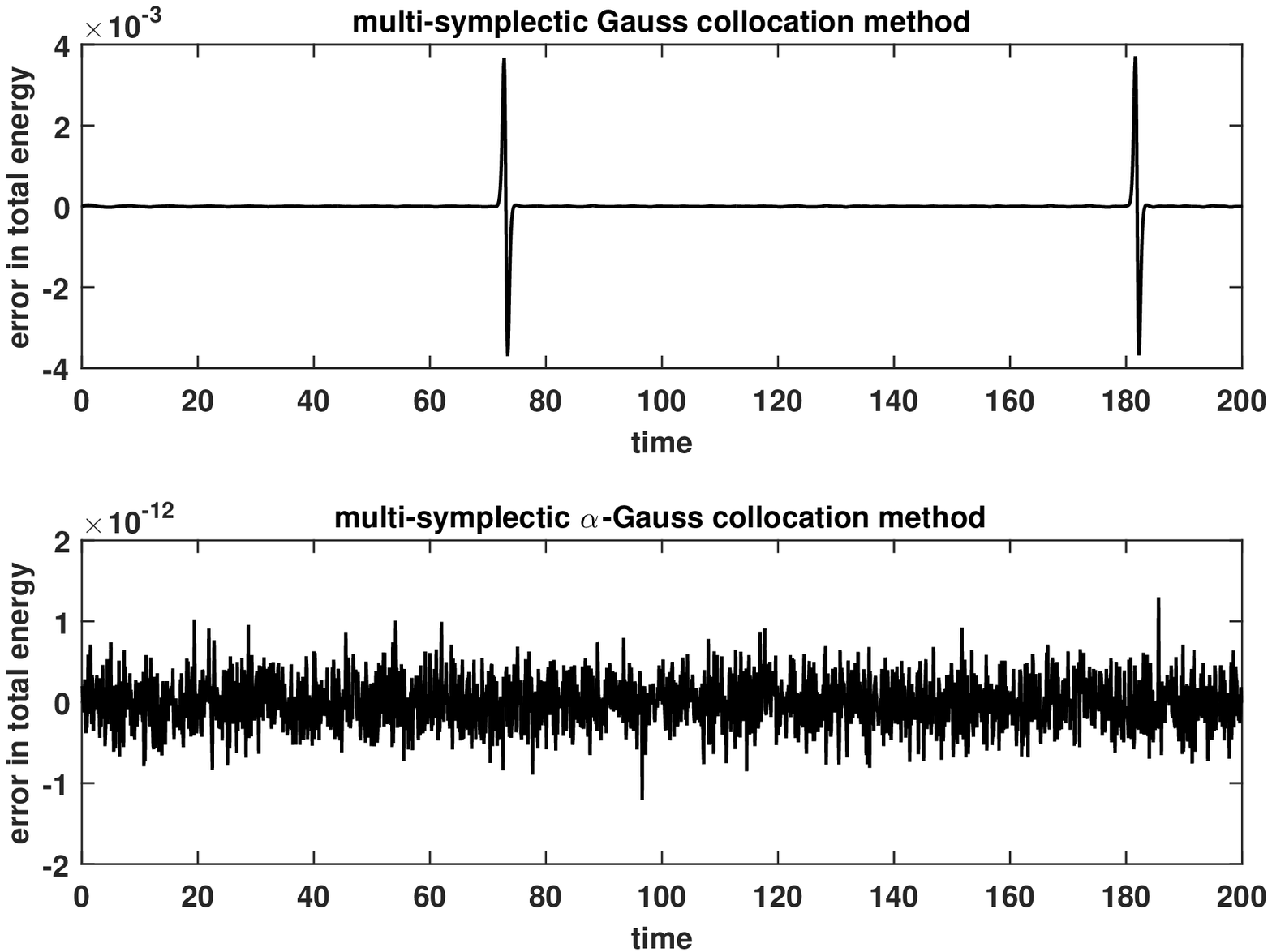}
 \caption{Comparison of numerical errors in the global energy for both multi-symplectic Gauss collocation method and
multi-symplectic $\alpha\text{-}$Gauss collocation method over the
time interval $[0, 200]$.}\label{Fig5}\end{center}
\end{figure}

Fig.~\ref{Fig6} shows the comparison of the error in the local discrete ECL \eqref{equaion:2.12}-\eqref{equaion:2.13} of MSRK method and
 $\alpha$-MSRK method, 
 in the time intervals $[0,30]$ and $[150, 170]$, respectively.  
Observe that $\alpha$-MSRK  method (about $10^{-13}$) conserves
 the  discrete ECL  better than MSRK method (about
$10^{-3}$).

\begin{figure}[tp!]
\begin{center}
\includegraphics[width=8cm]{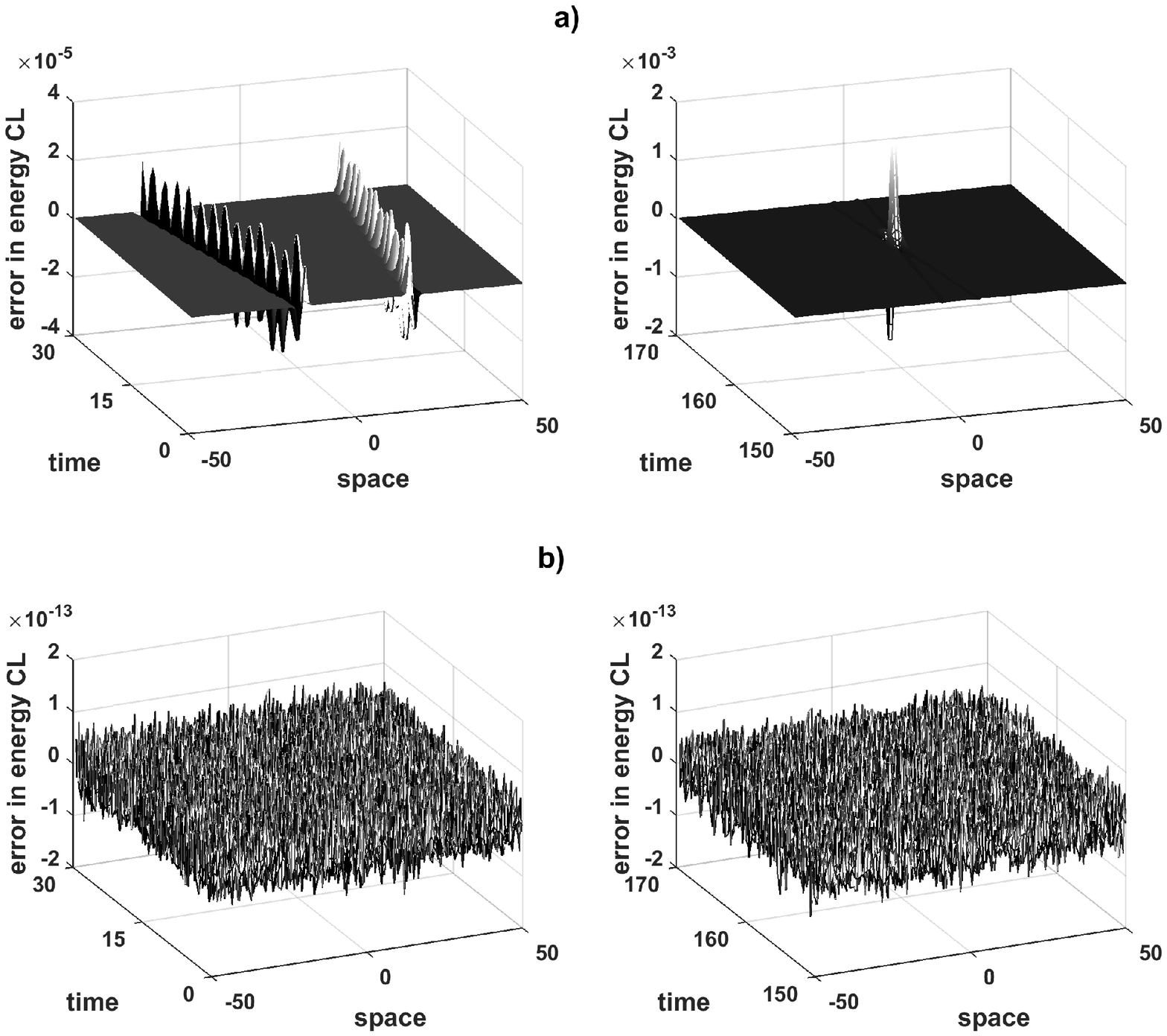}\\
 \caption{Comparison of numerical errors in the local energy for both multi-symplectic Gauss collocation method and
multi-symplectic $\alpha$-Gauss collocation method over the time
intervals $[0, 30]$ and $[150, 170]$.}\label{Fig6}

\end{center}
\end{figure}

\subsection{Comparison of computational costs}

When we implement the numerical experiments to solve the problem with periodic
boundary condition by using the multi-symplectic
$\alpha\text{-}$RK method, the major costs result from the discretization of PDEs. The
nonlinear system to be solved in each domain $(0, \Delta
x)\times (0, \Delta t)$ has $(5s+3)r+3s+2$ unknowns. When $\alpha=0$, namely, the standard multi-symplctic
 RK method is applied, the nonlinear system has $(5s+2)r$
unknowns. Therefore when Newton
iteration method is applied, Jacobian matrix is of
$(5s+2)r$  dimension for  the case of $\alpha =0$, while  it is of
$(5s+3)r+3s+2$ dimension for the case of $\alpha=\alpha^*$. After assembling individual Jacobian
matrices through space axis (including $M=100$ grid points), we obtain the matrices with $M(5s+2)r$
and $M((5s+3)r+3s+2)$ dimensions respectively. These matrices have approximately sparse band structure with
bandwidth $(5s+2)r+2$ and $(5s+3)r+3s+4$, respectively. If we
denote the bandwidth by $d$ and dimension of band-matrix by $N$,
the calculation cost of Gauss elimination method can be written as follows:
\begin{align*}
{\rm Cost}&=[(N-d)d^2+d^3/3]({\rm forward})+(Nd-d^2/2)({\rm backward})\\
&=Nd^2-2d^3/3+Nd-d^2/2.
\end{align*}

If $N\gg d$, then ${\rm Cost}\approx Nd^2+Nd\approx Nd^2$.
Therefore, the costs of these two methods are approximately $M((5s+2)r+2)^2$ and
$M((5s+3)r+2s+4)^2 $, respectively.
From this point, we know that the larger $s$ and $r$  are, the smaller
the difference in dimensions of individual Jacobian matrices
is. 
We compare the computational time needed to solve the sine-Gordon
equation \eqref{sg eq} by using the multi-symplectic $\alpha$-Gauss collocation
method with $r=1$ and $s=2$  in the cases of $\alpha=0$ and
$\alpha=\alpha^{*}$. In the numerical experiments, we have actually used Gauss
elimination method to solve the linear system in each step of Newton
iterations. We notice from Table~\ref{Table1}  that  the  costs in numerical experiments coincide
approximately with the above theoretical results.

\begin{table}
\begin{center}
  \begin{tabular}{|c|c|c|}
    \hline
    
    \hline
      ${\rm Computer}$ ${\rm Type}$ &$\alpha=0$& $\alpha=\alpha^{*}$\\
    \hline
    ${\rm Core}$ ${\rm Two}$ ${\rm Duo}$ & $1091.47s$ \quad   & $3283.46s$ \quad \\
    \hline
     ${\rm Four}$ ${\rm Generation}$ ${\rm Co}$ ${\rm i5}$ & $482.3s$ \quad   & $1320.8s$ \quad  \\
    \hline
    
    \hline
  \end{tabular}\\
\caption{Computer costs for the cases of $\alpha=0$ and $\alpha=\alpha^*$.}\label{Table1}
\end{center}
\end{table}

\section{Concluding remarks}
\label{sec:6}

We propose a family of multi-symplectic  $\alpha$-RK
methods for Hamiltonian wave equations. These methods are
multi-symplectic perturbations of the classical multi-symplectic RK
methods with the free parameter $\alpha$. The existence of a parameter such that the methods are energy-preserving in a weaker sense is proved. This weaker sense means that the parameter depends on the step sizes and initial data, which says that the energy conservation property may fail if one changes the step sizes or the initial data.  
 Numerical
experiments show the effectiveness of the proposed method, and
the preservation of both multi-symplecticity and energy.  
This work considers only energy-preserving property
 together with multi-symplecticity, 
a research on multi-symplectic
RK method preserving both energy and momentum, further multiple
invariants,  is the future work.
\section*{Acknowledgements}
Jialin Hong and Chuchu Chen are supported by National Natural Science Foundation of China (NOs.  91630312, 11711530017, and 11871068). Chol Sim and  Kwang Sonwu are supported
by Academy of Mathematics and Systems Science, Chinese Academy of Sciences.

%\section*{References}

%\bibliography{mybibfile}

\end{document}